\DeclareMathOperator{\diag}{diag}
\DeclareMathOperator{\sgn}{sgn}
\def\N0{\mathbb{N}_0}
\newtheorem{theorem}{Theorem}
\newtheorem{lemma}[theorem]{Lemma}
\newtheorem{proposition}[theorem]{Proposition}
\newtheorem{corollary}[theorem]{Corollary}
\newtheorem{open problem}{Open Problem}
\theoremstyle{definition}
\newtheorem*{remark}{Remark}
\newtheorem*{ack}{Acknowledgement}
\newtheorem{definition}[theorem]{Definition}
\def\sgn{\operatorname{sgn}}
\def\diag{\operatorname{diag}}
\def\Z{\mathbb Z}
\def\T{\textup{\textsf{T}}}
\def\G{,\text{G}}
\begin{document}
\title[Cyclotomic symmetrizable matrices]{Symmetrizable integer matrices having all their eigenvalues in the interval $[-2,2]$}
\author{James McKee}
\address{Department of Mathematics, Royal Holloway, University of London, Egham Hill, Egham, Surrey, TW20 0EX, U.K.}\email{james.mckee@rhul.ac.uk}
\author{Chris Smyth}
\address{School of Mathematics\\
University of Edinburgh\\
Edinburgh EH9 3FD\\
Scotland, U.K.}
\email{c.smyth@ed.ac.uk}
\subjclass[2020]{15A18, 15B36, 11C20}
\keywords{Symmetrizable matrices, spectral radius, Dynkin diagrams}

\maketitle
\begin{abstract}
The adjacency matrices of graphs form a special subset of the set of all integer symmetric matrices.
The description of which graphs have all their eigenvalues in the interval $[-2,2]$ (i.e., those having spectral radius at most $2$) has been known for several decades.
In 2007 we extended this classification to arbitrary integer symmetric matrices.

In this paper we turn our attention to symmetrizable matrices.
We classify the connected nonsymmetric but symmetrizable matrices which have entries in $\Z$ that are maximal with respect to having all their eigenvalues in $[-2,2]$. 
This includes a spectral characterisation of the affine and finite Dynkin diagrams that are not simply laced (much as the graph result gives a spectral characterisation of the simply laced ones).
\end{abstract}

\section{Introduction}

The simply laced finite and affine Dynkin diagrams $A_n$, $\widetilde A_n$ ($n\ge 1$), $D_n$, $\widetilde D_n$ ($n\ge 4$) and $E_n$, $\widetilde E_n$ ($n=6,7,8$) (see Figure \ref{F-ADE})  are very well known, and appear, mysteriously, in a wide variety of contexts where they are used to classify various kinds of mathematical objects.

\begin{figure}[hb]
\[
\begin{xy}
@={(0,0)="a", (6,0), (12,0)="b", (12,6), (12,12)="c", (18,0), (24,0)="d", (36,0)="e", (42,0), (48,0), (54,0)="f", (54,6)="g", (60,0), (66,0), (72,0)="h", (84,0)="i", (90,0), (96,0)="j", (96,6)="k", (102,0), (108,0), (114,0), (120,0), (126,0)="l", (18,18)="m", (24,18), (30,18), (36,24)="o", (42,18), (48,18), (54,18)="q", (78,18)="r", (80,24)="s", (84,18)="t", (90,18), (102,18), (108,18)="w", (112,24)="x", (114,18)="y"},
@@{*\frm<3pt>{*}},
"a";"d" **@{-}, "b";"c" **@{-},
"d" *\frm<6pt>{o},
(12,-5) *{E_6,\ \widetilde{E}_6},
"e";"h" **@{-}, "f";"g" **@{-}, "h" *\frm<6pt>{o},
(54,-5) *{E_7,\ \widetilde{E}_7},
"i";"l" **@{-}, "j";"k" **@{-}, "l" *\frm<6pt>{o},
(105,-5) *{E_8,\ \widetilde{E}_8},
"m";(32,18) **@{-}, (34,18);(38,18) **@{*}, 
(40,18);"q" **@{-}, "m";"o" **@{-}, "o";"q" **@{-},
"o" *\frm<6pt>{o}, (36,13) *{A_n,\ \widetilde{A}_n\ (n\ge 1)},
"r";(92,18) **@{-}, (94,18);(98,18) **@{*},
(100,18);"y" **@{-}, "s";"t" **@{-}, "w";"x" **@{-},
"x" *\frm<6pt>{o},
(96,13) *{D_n,\ \widetilde{D}_n\ (n\ge 4)}
\end{xy}
\]
\caption{The simply laced Dynkin diagrams. The circled vertices are for the affine diagrams only.}\label{F-ADE}
\end{figure}
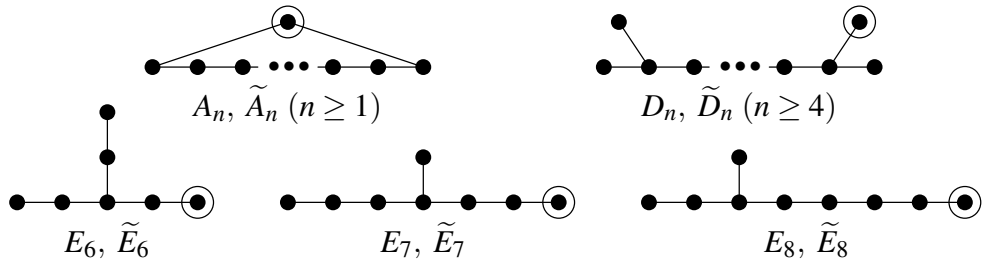

See for instance Arnold \cite{A} for a discussion of parts of the mystery, and Hazewinkel {\it et al} \cite{HHSV} for many of the contexts. 
One classification context not mentioned in \cite{HHSV}, however, is a simple spectral one:  the finite Dynkin diagrams  $A_n\,(n\ge 1)$, $D_n\,(n\ge 4)$ and $E_n\,(n=6,7,8)$ are the graphs with the property of being connected and having all their eigenvalues in the open interval $(-2,2)$, while the affine Dynkin diagrams $\widetilde A_n\,(n\ge 2)$, $\widetilde D_n\,(n\ge 4)$ and $\widetilde E_n\,(n=6,7,8)$ are the graphs maximal with respect to the property of being both connected and having all eigenvalues in the closed interval $[-2,2]$.
(These eigenvalues are defined as the eigenvalues of the adjacency matrix of the graph.) 
These results are credited to J.H. Smith \cite{Smi} in the graph-theoretic literature, but perhaps were known earlier by other specialists.
By Cauchy interlacing \cite{F} all induced subgraphs of such graphs  also have their eigenvalues in $[-2,2]$. 
This implies that the connected graphs with all eigenvalues in $[-2,2]$ are precisely the connected induced subgraphs of one of these affine Dynkin diagrams.

Since graphs can be considered, via their adjacency matrices, to be symmetric matrices all of whose entries are $0$ or $1$, with all diagonal elements $0$, it is natural to identify the two. 
In 2007 we extended the Smith results to arbitrary integer symmetric matrices \cite[Theorems 1, 2, 3]{MS2007}. 
Those matrices corresponded, with trivial exceptions (the $1\times 1$ matrices $\pm(2)$ and the $2\times 2$ adjacency matrix of $\pm\,\widetilde{A}_{1}$ (Figure \ref{F-AAA})) to {\it charged signed} graphs, i.e., to graphs with  edges weighted $\pm 1$ and `charges' of $\pm 1$ (or $0$) at vertices (see \cite[Theorems 1, 2, 3]{MS2007}).   
Thus all connected integer symmetric matrices maximal with respect to having all eigenvalues in $[-2,2]$, except $\pm(2)$ and $\pm\, \widetilde{A}_1$, have entries $0$ or $\pm 1$. 
Interestingly, these charged signed graphs actually have {\it all} their eigenvalues at $-2$ and $2$, and indeed the square of their adjacency matrix is always $4$ times the identity matrix. 
Smith's ADE result then follows  by finding the  connected induced subgraphs of these charged signed graphs that are maximal with respect to the property of having no charges or negative edges (allowing sign switchings: see Definition \ref{D:equivalence} below).

Recently, we have turned our attention to characterising spectrally the affine and finite Dynkin diagrams that are not simply laced---see Figures \ref{F-ADynk} and \ref{F-FDynk}. 
We shall show (Corollary \ref{C-1}) that the affine ones correspond to the connected nonsymmetric but symmetrizable matrices having entries in $\N0 = \{0,\ 1,\ 2,\ 3,\ \dots\}$ that are maximal with respect to the property of having all their eigenvalues in $[-2,2]$. 
In the language of \cite{MS2007} these would be called \textit{cyclotomic} symmetrizable integer matrices, in that their eigenvalues naturally map to roots of unity (if $\lambda = 2\cos\theta$ is an eigenvalue, then $\lambda \mapsto e^{\pm i\theta}$).
Note that the results show that for such a matrix $A=(a_{ij})$ the only pairs  $\{a_{ij},a_{ji}\}$ with $a_{ij}\ne a_{ji}$ that occur are $\{1,2\}$, $\{1,3\}$ or  $\{1,4\}$, with only one variety of such pairs occurring in each digraph.

For the finite Dynkin diagrams $B_n,C_n,F_4,G_2$ we show (Corollary \ref{C-5}) that they correspond to the non-symmetric symmetrizable integer matrices maximal with respect to having all their eigenvalues in $(-2,2)$.

We complete the spectral picture for nonsymmetric, symmetrizable connected integer matrices having all their eigenvalues in $[-2,2]$ in Theorem \ref{T-1} below. 
We also deduce a corresponding result (Theorem \ref{T-2}) for 
such matrices having all their eigenvalues in $(-2,2)$.
We emphasise that the novel feature of these results is that some integer matrix entries are allowed to be negative.

Our digraph naming follows \cite{Carter2005} for known digraphs. 
In particular, for the (tilded) affine Dynkin diagrams their subscript is  one fewer than the number of vertices. 
For all other digraphs, including newly defined ones, the subscript will equal the number of vertices.

\section{Understanding the diagrams}
To any square matrix $A = (a_{ij})$ with entries in $\Z$ we associate a digraph, also called $A$. 
(We shall refer to $A$ interchangeably as a digraph and as a matrix, sometimes within the same sentence.)
To the $i$th row of the matrix $A$ we associate a vertex $i$ of the digraph.
The diagonal entry $a_{ii}$ represents a \emph{charge} on the vertex $i$.
We shall be considering all possible integral charges, but the only ones we shall need to draw are $0$, $1$ and $-1$, for which we call the vertices neutral, positively charged and negatively charged respectively, and we draw them as\ \ $\begin{xy}
0 *\frm<3pt>{*}
\end{xy}$\ , $\begin{xy}
0 *=++{+}*\frm{o}
\end{xy}$ and $\begin{xy}
0 *=++{-}*\frm{o}
\end{xy}$ respectively.
The directed edge weights $a_{ij}$ (from $i$ to $j$) are arbitrary integers, but we shall only need to draw digraphs for which
\[
\{a_{ij},a_{ji}\} \in \bigl\{ \{0,0\},\ \{1,1\},\ \{-1,-1\},\ \{1,2\},\ 
\{1,3\},\ \{1,4\} \, \{-1, -2\}
\bigr\}\,.
\]
We represent these pairs of directed edges by drawing a single labelled edge (or no edge) as shown (here picturing all the vertices as being neutral).
\[
\begin{xy}
@={(0,0), (10,0), (18,0)="a", (28,0)="b", (36,0)="c", (46,0)="d", (54,0)="e", (64,0)="f", (72,0)="g", (82,0)="h", (90,0)="i", (100,0)="j"},
@@{*\frm<3pt>{*}},
(0,-5) *\txt\small{$i$}, (10,-5) *\txt\small{$j$}, (5,-10) *{a_{ij}=0}, (5,-15) *{a_{ji}=0},
"a";"b" **@{-},
(18,-5) *\txt\small{$i$}, (28,-5) *\txt\small{$j$}, (23,-10) *{a_{ij}=1}, (23,-15) *{a_{ji}=1},
"c";"d" **@{~},
(36,-5) *\txt\small{$i$}, (46,-5) *\txt\small{$j$}, (41,-10) *{a_{ij}=-1}, (41,-15) *{a_{ji}=-1},
"e";"f" **@{-}, (59,2) *[@!-90]\txt\tiny{$1$}, (59,-2) *[@!90]\txt\tiny{$2$},
(54,-5) *\txt\small{$i$}, (64,-5) *\txt\small{$j$}, (59,-10) *{a_{ij}=1}, (59,-15) *{a_{ji}=2},
"g";"h" **@{-}, (77,2) *[@!-90]\txt\tiny{$1$}, (77,-2) *[@!90]\txt\tiny{$3$},
(72,-5) *\txt\small{$i$}, (82,-5) *\txt\small{$j$}, (77,-10) *{a_{ij}=1}, (77,-15) *{a_{ji}=3},
"i";"j" **@{-}, (95,2) *[@!-90]\txt\tiny{$1$}, (95,-2) *[@!90]\txt\tiny{$4$},
(90,-5) *\txt\small{$i$}, (100,-5) *\txt\small{$j$}, (95,-10) *{a_{ij}=1}, (95,-15) *{a_{ji}=4},
@i @={(108,0)="k", (118,0)="l"}, @@{*\frm<3pt>{*}},
"k";"l" **@{~}, (113,2) *[@!-90]\txt\tiny{$1$}, (113,-2) *[@!90]\txt\tiny{$2$},
(108,-5) *\txt\small{$i$}, (118,-5) *\txt\small{$j$}, (113,-10) *{a_{ij}=-1}, (113,-15) *{a_{ji}=-2},
\end{xy}
\]
In the asymmetric cases, the value $a_{ij}$ is written on the \emph{left} of the edge as we travel from $i$ to $j$ (and $a_{ji}$ is on the left as we travel from $j$ to $i$).

Given a digraph $A$, the matrix $A$ is determined only once the vertices have been given an ordering, but we regard all possible such matrices as equivalent (Definition \ref{D:equivalence}).
For example, the digraph $B_2^{\pm}$ of Figure \ref{F-A12} corresponds either to $\begin{pmatrix}
1 & 1 \\ 2 & -1
\end{pmatrix}$ or $\begin{pmatrix}
-1 & 2 \\ 1 & 1
\end{pmatrix}$, but not $\begin{pmatrix}
1 & 2 \\ 1 & -1
\end{pmatrix}$ .
Naturally for a digraph $A$ we define the digraph $A^\T$ to be that which corresponds to the matrix $A^\T$ (the transpose of $A$).

The diagrams of Figures \ref{F-A13G}--\ref{F-WrG} correspond to symmetric matrices with entries in $\sqrt{\N0}=\{0,\ 1,\ -1,\ \sqrt{2},\ -\sqrt{2}, \sqrt{3},\ -\sqrt{3},\ \dots\}$.
In this symmetric case we draw the edges as shown.
\[
\begin{xy}
@={(0,0), (10,0), (20,0)="a", (30,0)="b", (40,0)="c", (50,0)="d", (60,0)="e", (80,0)="f", (90,0)="g", (110,0)="h"},
@@{*\frm<3pt>{*}},
(0,-5) *\txt\small{$i$}, (10,-5) *\txt\small{$j$}, (5,-10) *{a_{ij}=0},
"a";"b" **@{-},
(20,-5) *\txt\small{$i$}, (30,-5) *\txt\small{$j$}, (25,-10) *{a_{ij}=1},
"c";"d" **@{~},
(40,-5) *\txt\small{$i$}, (50,-5) *\txt\small{$j$}, (45,-10) *{a_{ij}=-1},
"e";(66,0) **@{-}, (74,0);"f" **@{-}, (70,0) *\txt\tiny{$\sqrt{a}$}, 
(60,-5) *\txt\small{$i$}, (80,-5) *\txt\small{$j$}, (70,-10) *{a_{ij}=\sqrt{a}},
"g";(96,0) **@{~}, (104,0);"h" **@{~}, (100,0) *\txt\tiny{$\sqrt{a}$}, 
(90,-5) *\txt\small{$i$}, (110,-5) *\txt\small{$j$}, (100,-10) *{a_{ij}=-\sqrt{a}}
\end{xy}
\]

\begin{figure}[hb]
\[
\begin{xy}
@={(0,5)="a", (0,25)="b", (20,5)="c", (20,25)="d", (40,5)="e", (40,25)="f", (60,5)="g", (60,25)="h", (80,0)="i", (80,20)="j", (90,5)="k", (90,25)="l", (100,0)="m", (100,20)="n", (110,5)="o", (110,25)="p"},
@@{*\frm<3pt>{*}},
(0,0) *{\widetilde{A}_1},
"a";"b" **@{-},
(-2,15) *\txt\small{2}, (2,15) *[@!180]\txt\small{2},
(20,0) *{\widetilde{A}_1'},
"c";"d" **@{-},
(18,15) *\txt\small{4}, (21.5,15) *[@!180]\txt\small{1},
(50,15) *{O_4'},
"e";"f" **@{~}, "e";"g" **@{-}, "f";"h" **@{-}, "g";"h" **@{-},
(50,3) *[@!90]\txt\small{3}, (50,7) *[@!-90]\txt\small{1},
(50,23) *[@!90]\txt\small{3}, (50,27) *[@!-90]\txt\small{1},
(110,0) *{S_8^-},
"i";"j" **@{-}, "i";"k" **@{-}, "i";"m" **@{-},
"j";"l" **@{~}, "j";"n" **@{-}, "k";"l" **@{-}, "k";"o" **@{~}, 
"l";"p" **@{-}, "m";"n" **@{~}, "m";"o" **@{-},
"n";"p" **@{-}, "o";"p" **@{-},
(78,10) *\txt\small{2}, (81.5,10) *[flip]\txt\small{1},
(88,15) *\txt\small{2}, (91.5,15) *[flip]\txt\small{1},
(98,10) *\txt\small{2}, (102,10) *[flip]\txt\small{1},
(108,15) *\txt\small{2}, (111.5,15) *[flip]\txt\small{1},
\end{xy}
\]
\caption{ $\widetilde{A}_{1}$, $\widetilde{A}_{1}'$, $O_4'$ and ${S}_{8}^{-}$.}\label{F-AAA}
\end{figure}

\begin{figure}[ht]
\[
\begin{xy}
@={(0,10)="a", (10,0)="b", (10,20)="c", (20,0)="d", (20,20)="e", (30,0)="f", (30,20)="g", (50,0)="l", (50,20)="m", (60,0)="n", (60,20)="o", (70,0)="p", (70,20)="q", (80,10)="r"},
@@{*\frm<3pt>{*}},
(37,0)="h", (37,20)="i",
(43,0)="j", (43,20)="k",
(33,0)="f2", (33,20)="g2", (47,0)="l2", (47,20)="m2",
"a";"b" **@{-}, "a";"c" **@{-},
"b";"f2" **@{~}, "b";"e" **@{~},
"c";"d" **@{-}, "c";"g2" **@{-},
"d";"g" **@{~}, "e";"f" **@{-},
"h";"j" **@{*}, "i";"k" **@{*},
"m2";"q" **@{-}, "l2";"p" **@{~},
"l";"o" **@{~}, "m";"n" **@{-},
"n";"q" **@{~}, "o";"p" **@{-},
"p";"r" **@{~}, "q";"r" **@{-},
(4,4) *[@!45]\txt\small{1}, (6.5,6.5) *[@!-135]\txt\small{2},
(3.5,16.5) *[@!-45]\txt\small{2}, (6,13.5) *[@!135]\txt\small{1},
(73.5,13) *[@!45]\txt\small{$2'$}, (77,16.5) *[@!-135]\txt\small{$1'$},
(73,6) *[@!-45]\txt\small{$1'$}, (76.5,3) *[@!135]\txt\small{$2'$},
\end{xy}
\]
\caption{ ${L_n}$ and ${L_n'}$, for $n=2r+2\ge 4$ and even, where $r$ is the number of vertices on the top row. The two weight pairs $2',1'$ are $2,1$ for $L_n$, but are swapped to $1,2$ for $L_n'$. Note that $L_n'$ is equivalent to its transpose, but if $n\ge 6$ then $L_n$ is not (Lemma \ref{L-equiv} and Corollary \ref{C-inequiv}).}\label{F-Vr}
\end{figure}
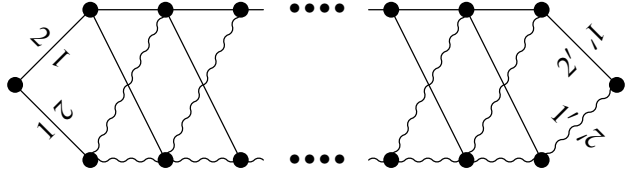

\begin{figure}[ht]
\[
\begin{xy}
@={(40,10)="g", (50,0)="h", (50,20)="i", (60,0)="j", (60,20)="k", (70,0)="l", (70,20)="m", (90,0)="n", (90,20)="o", (100,0)="p", (100,20)="q"},
@@{*\frm<3pt>{*}},
(0,0) *=++{+}*\frm{o}="a", (0,20) *=++{-}*\frm{o}="b",
(10,0) *=++{-}*\frm{o}="c", (10,20) *=++{+}*\frm{o}="d",
(30,0) *=++{+}*\frm{o}="e", (30,20) *=++{-}*\frm{o}="f",
(110,0) *=++{+}*\frm{o}="r", (110,20) *=++{+}*\frm{o}="s",
(73,0)="t", (77,0)="u", (83,0)="v", (87,0)="w",
(73,20)="x", (77,20)="y", (83,20)="z", (87,20)="aa",
(-8,0) *{{A}_2^\pm}, (20,10) *{O_4^\pm},
(80,10) *{L_n^+},
"a";"b" **@{-},
(-2,10) *\txt\small{3}, (1.5,10) *[flip]\txt\small{1},
"c";"d" **@{-}, "c";"e" **@{-}, "d";"f" **@{-}, "e";"f" **@{~},
(20,-2) *[@!90]\txt\small{1}, (20,2) *[@!-90]\txt\small{2},
(20,18) *[@!90]\txt\small{1}, (20,22) *[@!-90]\txt\small{2},
"g";"h" **@{-}, "g";"i" **@{-},
"h";"t" **@{~}, "h";"k" **@{~},
"i";"j" **@{-}, "i";"x" **@{-},
"j";"m" **@{~}, "k";"l" **@{-}, 
"u";"v" **@{*}, "y";"z" **@{*},
"w";"r" **@{~}, "aa";"s" **@{-},
"n";"q" **@{~}, "o";"p" **@{-},
"p";"s" **@{~}, "q";"r" **@{-}, "r";"s" **@{~},
(43.5,3.5) *[@!45]\txt\small{2}, (46,6) *[@!-135]\txt\small{1},
(44,16) *[@!-45]\txt\small{1}, (46,13.5) *[@!135]\txt\small{2}
\end{xy}
\]
\caption{$A_2^{\pm}$, $O_4^{\pm}$  and $L_n^{+}$, for $n=2r+1\ge 3$, where $r$ is the number of vertices on its top row, including the charged vertex.}\label{F-Wr}
\end{figure}
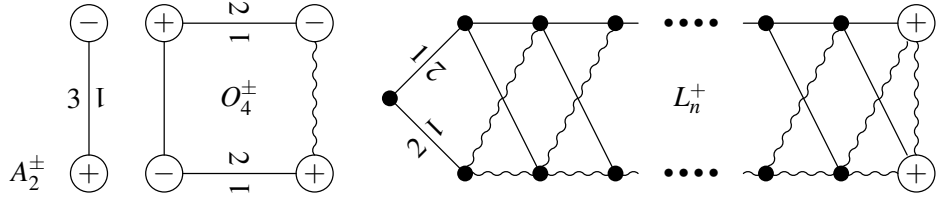

\begin{figure}[ht]
\[
\begin{xy}
@={(0,0)="a", (8,0)="b", (8,12)="c", (8,20)="d", (16,0)="e", (16,16)="f", (24,16)="g", (32,0)="h", (32,16)="i", (40,0)="j", (40,16)="k", (56,0)="l", (56,16)="m", (64,0)="n", (64,16)="o", (72,0)="p", (80,0)="q", (80,16)="r", (88,0)="s", (88,16)="t", (96,16)="u", (104,0)="v", (112,0)="w", (112,16)="x", (120,0)="y", (120,16)="z"},
@@{*\frm<3pt>{*}},
(18,0)="aa", (21,0)="bb", (27,0)="cc", (30,0)="dd",
(42,16)="ee", (45,16)="ff", (51,16)="gg", (54,16)="hh",
(98,16)="ii", (101,16)="jj", (107,16)="kk", (110,16)="ll",
"c";"f" **@{-}, "d";"f" **@{-}, "f";"ee" **@{-},
"ff";"gg" **@{*}, "hh";"o" **@{-},
(60,14) *[@!90]\txt\small{1}, (60,18) *[@!-90]\txt\small{2},
(3,16) *{\widetilde{B}_n},
"r";"ii" **@{-}, "jj";"kk" **@{*}, "ll";"z" **@{-},
(84,14) *[@!90]\txt\small{2}, (84,18) *[@!-90]\txt\small{1},
(116,14) *[@!90]\txt\small{1}, (116,18) *[@!-90]\txt\small{2},
(75,16) *{\widetilde{C}_n},
(-5,0) *{\widetilde{C}_n'},
(51,0) *{\widetilde{F}_4}, (99,0) *{\widetilde{G}_2},
"a";"aa" **@{-}, "bb";"cc" **@{*}, "dd";"j" **@{-},
(4,-2) *[@!90]\txt\small{2}, (4,2) *[@!-90]\txt\small{1},
(36,-2) *[@!90]\txt\small{2}, (36,2) *[@!-90]\txt\small{1},
"l";"s" **@{-}, 
(76,-2) *[@!90]\txt\small{2}, (76,2) *[@!-90]\txt\small{1},
"v";"y" **@{-},
(116,-2) *[@!90]\txt\small{3}, (116,2) *[@!-90]\txt\small{1},
\end{xy}
\]
\caption{The symmetrizable but nonsymmetric affine Dynkin diagrams $\widetilde{B}_n\,(n\ge 3)$, $\widetilde{C}_n\,(n\ge 2)$, $\widetilde{C}_n'\,(n\ge 2)$, $\widetilde{F}_4$, $\widetilde{G}_2$.  
Note that only $\widetilde{C}_n'$ is equivalent to its transpose (Lemma \ref{L-equiv} and Corollary \ref{C-inequiv}).} \label{F-ADynk}
\end{figure}
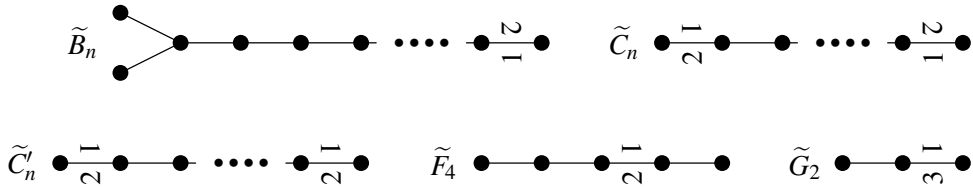

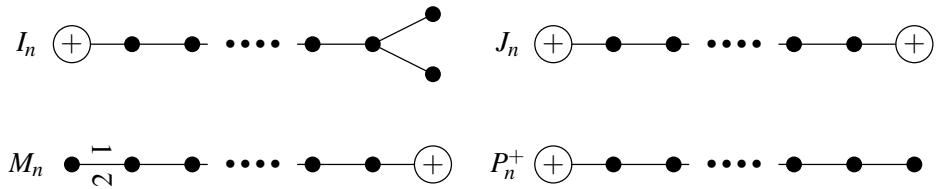
\begin{figure}[b]
\[
\begin{xy}
@={(0,0)="a", (8,0), (8,16), (16,0), (16,16), (32,0), (32,16), (40,0), (40,16)="k", (48,12)="m", (48,20)="n", (72,0), (72,16), (80,0), (80,16), (96,0), (96,16), (104,0), (104,16), (112,0)="y"},
@@{*\frm<3pt>{*}},
(0,16) *=++{+}*\frm{o}="b", (48,0) *=++{+}*\frm{o}="l",
(64,0) *=++{+}*\frm{o}="o", (64,16) *=++{+}*\frm{o}="p",
(112,16) *=++{+}*\frm{o}="z",
(18,0)="c", (18,16)="d", (21,0)="e", (21,16)="f",
(27,0)="g", (27,16)="h", (30,0)="i", (30,16)="j",
(82,0)="q", (82,16)="r", (85,0)="s", (85,16)="t",
(91,0)="u", (91,16)="v", (94,0)="w", (94,16)="x",
(-6,16) *{I_n}, "b";"d" **@{-}, "f";"h" **@{*},
"j";"k" **@{-}, "k";"m" **@{-}, "k";"n" **@{-},
(58,16) *{J_n}, "p";"r" **@{-}, "t";"v" **@{*}, "x";"z" **@{-},
(-6,0) *{M_n}, "a";"c" **@{-}, "e";"g" **@{*}, "i";"l" **@{-},
(4,-2) *[@!90]\txt\small{2}, (4,2) *[@!-90]\txt\small{1},
(58,0) *{P_n^+}, "o";"q" **@{-},
"s";"u" **@{*}, "w";"y" **@{-}
\end{xy}
\]
\caption{ The $n$-vertex digraphs $I_n\,(n\ge 3)$, and $J_n$, $M_{n}$ and $P_n^{+}$ for $n\ge 2$. 
Only $M_n$ is nonsymmetric.}\label{F-Mr}
\end{figure}

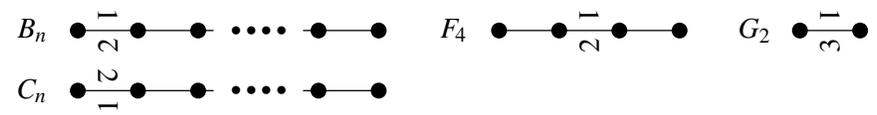
\begin{figure}[b]
\[
\begin{xy}
@={(0,0)="a", (8,0), (16,0), (32,0), (40,0)="f", (56,0)="g", (64,0), (72,0), (80,0)="h", (96,0)="i", (104,0)="j"},
@@{*\frm<3pt>{*}},
(18,0)="b", (21,0)="c", (27,0)="d", (30,0)="e", 
"a";"b" **@{-}, "c";"d" **@{*}, "e";"f" **@{-},
(-6,0) *{B_n}, (4,-2) *[@!90]\txt\small{2},
(4,2) *[@!-90]\txt\small{1},
(50,0) *{F_4}, (68,-2) *[@!90]\txt\small{2},
(68,2) *[@!-90]\txt\small{1},
(90,0) *{G_2}, (100,-2) *[@!90]\txt\small{3},
(100,2) *[@!-90]\txt\small{1},
"g";"h" **@{-}, "i";"j" **@{-},
(0,-8)*\frm<3pt>{*}="A", (8,-8)*\frm<3pt>{*}="B", (16,-8)*\frm<3pt>{*}="C", (21,-8)="D", (27,-8)="E",
(30,-8)="F", (32,-8)*\frm<3pt>{*}="G", (40,-8)*\frm<3pt>{*}="H",
"A";(18,-8) **@{-}, "D";"E" **@{*}, "F";"H" **@{-},
(-6,-8) *{C_n}, (4,-10) *[@!90]\txt\small{1},
(4,-6) *[@!-90]\txt\small{2}
\end{xy}
\]
\caption{The symmetrizable but nonsymmetric finite Dynkin diagrams $B_n\,(n\ge 2)$, $F_4$ and $G_2$; $C_n\,(n\ge 3)=B_n^\T$. Note that $B_2$, $F_4$ and $G_2$ are equivalent to their transposes, while $B_n\,(n\ge 3)$ is not (Lemma \ref{L-equiv} and Corollary \ref{C-inequiv}). }\label{F-FDynk}
\end{figure}

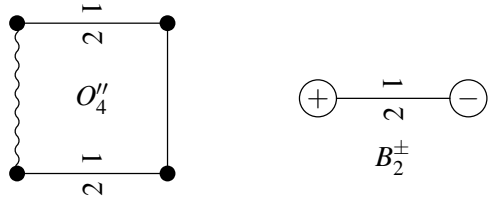
\begin{figure}[ht]
\[
\begin{xy}
@={(0,0)="a", (0,20)="b", (20,0)="c", (20,20)="d"},
@@{*\frm<3pt>{*}},
(40,10) *=++{+}*\frm{o}="e", (60,10) *=++{-}*\frm{o}="f",
(10,10) *{O_4''},
"a";"b" **@{~}, "a";"c" **@{-}, "b";"d" **@{-}, "c";"d" **@{-},
(10,-2) *[@!90]\txt\small{2}, (10,2) *[@!-90]\txt\small{1},
(10,18) *[@!90]\txt\small{2}, (10,22) *[@!-90]\txt\small{1},
(50,2) *{B_2^\pm},
"e";"f" **@{-},
(50,8) *[@!90]\txt\small{2}, (50,12) *[@!-90]\txt\small{1}
\end{xy}
\]
\caption{ The digraphs   $O_4''$ and   ${B}_{2}^{\pm}$. }\label{F-A12}
\end{figure}

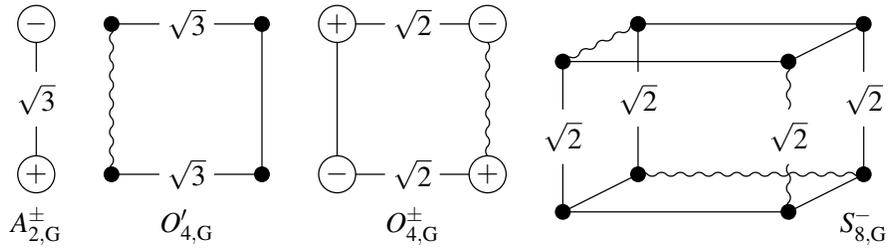
\begin{figure}[ht]
\[
\begin{xy}
(0,5) *=++{+}*\frm{o}="a", (0,15) *=+++{\sqrt{3}}*\frm{}="b",
(0,25) *=++{-}*\frm{o}="c",
"a";"b" **@{-}, "b";"c" **@{-}, (0,-2) *{A_{2\G}^\pm},
@={(10,5)="a", (10,25)="b", (30,5)="e", (30,25)="f"},
@@{*\frm<3pt>{*}},
"a";"b" **@{~}, "e";"f" **@{-},
(20,5) *=+++{\sqrt{3}}="c", (20,25) *=+++{\sqrt{3}}="d",
"a";"c" **@{-}, "c";"e" **@{-}, "b";"d" **@{-}, "d";"f" **@{-},
(20,-2) *{O_{4\G}'},
(40,5) *=++{-}*\frm{o}="a", (40,25) *=++{+}*\frm{o}="b",
(60,5) *=++{+}*\frm{o}="e", (60,25) *=++{-}*\frm{o}="f",
(50,5) *=+++{\sqrt{2}}="c", (50,25) *=+++{\sqrt{2}}="d",
(50,-2) *{O_{4\G}^\pm},
"a";"c" **@{-}, "c";"e" **@{-}, "b";"d" **@{-}, "d";"f" **@{-},
"a";"b" **@{-}, "e";"f" **@{~},
@i, @={(70,0)="a", (70,20)="b", (80,5)="c", (80,25)="d", (100,0)="e", (100,20)="f", (110,5)="g", (110,25)="h"},
@@{*\frm<3pt>{*}},
(70,10) *=+++{\sqrt{2}}="i", (80,15) *=+++{\sqrt{2}}="j",
(100,10) *=+++{\sqrt{2}}="k", (110,15) *=+++{\sqrt{2}}="l",
"a";"i" **@{-}, "a";"c" **@{-}, "a";"e" **@{-},
"b";"i" **@{-}, "b";"d" **@{~}, "b";"f" **@{-},
"c";"j" **@{-}, "c";"g" **@{~}, "d";"j" **@{-},
"d";"h" **@{-}, "e";"k" **@{~}, "e";"g" **@{-},
"f";"k" **@{~}, "f";"h" **@{-}, "g";"l" **@{-},
"h";"l" **@{-},
(110,-2) *{S_{8\G}^-}
\end{xy}
\]
\caption{ The Greaves graphs ${A}_{2\G}^{\pm}$,  $O_{4\G}'$,  $O_{4\G}^{\pm}$ and $S_{8\G}^{-}$. Since they are symmetric, a single weight is given for each edge.}\label{F-A13G}
    
\end{figure}

\begin{figure} [ht]
\[
\begin{xy}
@={(-10,10)="a", (10,0)="d", (10,20)="e", (20,0)="f", (20,20)="g", (30,0)="h", (30,20)="i", (50,0)="r", (50,20)="s", (60,0)="t", (60,20)="u", (70,0)="v", (70,20)="w", (90,10)="z"},
(0,5) *=+++{\sqrt{2}}*\frm{}="b", (0,15)*=+++{\sqrt{2}}*\frm{}="c", (34,0)="j", (34,20)="k", (36,0)="l", (36,20)="m", (44,0)="n", (44,20)="o", (46,0)="p", (46,20)="q", (80,5) *=+++{\sqrt{2}}*\frm{}="x", (80,15) *=+++{\sqrt{2}}*\frm{}="y",
@@{*\frm<3pt>{*}},
"a";"b" **@{-}, "a";"c" **@{-}, "b";"d" **@{-},
"c";"e" **@{-}, "d";"j" **@{~}, "d";"g" **@{~},
"e";"f" **@{-}, "e";"k" **@{-}, "f";"i" **@{~},
"g";"h" **@{-}, "l";"n" **@{*}, "m";"o" **@{*},
"p";"v" **@{~}, "q";"w" **@{-}, "r";"u" **@{~},
"s";"t" **@{-}, "t";"w" **@{~}, "u";"v" **@{-},
"v";"x" **@{~}, "w";"y" **@{-}, "x";"z" **@{~}, "y";"z" **@{-}
\end{xy}
\]
\caption{ ${L}_{n\G}$ for $n=2r+2\ge 4$, where $r$ is the number of vertices on the top row.}\label{F-VrG}
\end{figure}

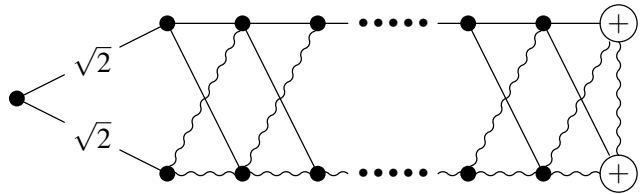
\begin{figure}[ht] 
\[
\begin{xy}
@={(-10,10)="a", (10,0)="d", (10,20)="e", (20,0)="f", (20,20)="g", (30,0)="h", (30,20)="i", (50,0)="r", (50,20)="s", (60,0)="t", (60,20)="u"},
(70,0) *=++{+}*\frm{o}="v", (70,20) *=++{+}*\frm{o}="w",
(0,5) *=+++{\sqrt{2}}*\frm{}="b", (0,15)*=+++{\sqrt{2}}*\frm{}="c", (34,0)="j", (34,20)="k", (36,0)="l", (36,20)="m", (44,0)="n", (44,20)="o", (46,0)="p", (46,20)="q", 
@@{*\frm<3pt>{*}},
"a";"b" **@{-}, "a";"c" **@{-}, "b";"d" **@{-},
"c";"e" **@{-}, "d";"j" **@{~}, "d";"g" **@{~},
"e";"f" **@{-}, "e";"k" **@{-}, "f";"i" **@{~},
"g";"h" **@{-}, "l";"n" **@{*}, "m";"o" **@{*},
"p";"v" **@{~}, "q";"w" **@{-}, "r";"u" **@{~},
"s";"t" **@{-}, "t";"w" **@{~}, "u";"v" **@{-},
"v";"w" **@{~}
\end{xy}
\]
\caption{ ${L}_{n\G}^{+}$ for $n=2r+1\ge 3$, where $r$ is the number of vertices on the top row.}\label{F-WrG}
\end{figure}

\clearpage

\section{Results}

Full definitions will follow (maximal, connected, symmetrizable, equivalent, \dots), but for convenience we present here in one place the main results of our paper.

Our first main result is the following.
\begin{theorem} \label{T-1}
Let $A$ be a connected symmetrizable integer matrix, maximal with respect to having all its eigenvalues in the interval $[-2,2]$. 
If $A$ is not symmetric, then $A$ is equivalent to one of the following digraphs (see Figures \textup{\ref{F-AAA}}, \textup{\ref{F-Vr}}, \textup{\ref{F-Wr}}).

Not charged:
 $\widetilde{A}_{1}'$,
 $O_4'$,
 ${S}_8^{-}$,
 ${L}_{n}$   and $L_n'\,(n\ge 4$ and even),
 $L_n^\T(n\ge 6,even)$.

Charged:
 ${O}_{4}^{\pm }$, 
 ${{A}}_{2}^{\pm }$, 
 ${L}_{n}^{+}$, $(L_n^+)^\T\,(n\ge 3$ and odd).

All these digraphs (matrices) $A$ have $A^2=4I$.
Furthermore, every connected symmetrizable nonsymmetric matrix having all its eigenvalues in the interval $[-2,2]$ is contained in a maximal one.
\end{theorem}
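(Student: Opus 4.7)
The natural strategy is to reduce the problem to the already-understood symmetric situation via standard symmetrization. Let $A$ admit a positive diagonal symmetrizer $D = \diag(d_1,\dots,d_n)$, so that $d_i a_{ij} = d_j a_{ji}$, and set $S := D^{1/2} A D^{-1/2}$. Then $S$ is similar to $A$ (same spectrum in $[-2,2]$), real symmetric, with $S_{ii} = a_{ii}\in\Z$ on the diagonal and $S_{ij} = \sgn(a_{ij})\sqrt{a_{ij}a_{ji}}\in\pm\sqrt{\N0}$ off the diagonal (the two entries of each allowed pair share a common sign). Thus $S$ is a connected cyclotomic symmetric matrix with integer diagonal and $\pm\sqrt{\N0}$-valued off-diagonal entries.

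Next I would apply the classification of connected cyclotomic symmetric matrices of this type, whose maximal representatives are the integer charged signed graphs of \cite{MS2007} together with the Greaves-type diagrams of Figures~\ref{F-A13G}, \ref{F-VrG}, \ref{F-WrG}. Consequently $S$ is equivalent, up to sign switching and vertex relabelling, to a principal submatrix of one matrix on that list. To extract the nonsymmetric integer $A$ I reverse the symmetrization: each edge with $S_{ij}^2 = k\in\{0,1,2,3,4\}$ admits finitely many integer lifts to a pair $(a_{ij},a_{ji})$ with $a_{ij}a_{ji} = k$, $\sgn(a_{ij}) = \sgn(S_{ij})$, and $\{|a_{ij}|,|a_{ji}|\}$ one of the pairs permitted in Section~2. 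For a coherent global lift (i.e., for the existence of a single $D$) the positive rationals $a_{ji}/a_{ij} = d_i/d_j$ must multiply to $1$ around every cycle of the underlying graph of $S$. A finite case analysis over the symmetric list, retaining only the nonsymmetric lifts and their transposes, should match the digraphs in the statement of the theorem.

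The identity $A^2 = 4I$ then follows for free: once we have embedded $A$ in a maximal object, the corresponding maximal symmetric $\hat S$ satisfies $\hat S^2 = 4I$ (as in \cite{MS2007}, and in the square-root extension used for the Greaves graphs), and conjugating by $D^{1/2}$ preserves this identity. For the embedding assertion, given any connected symmetrizable nonsymmetric cyclotomic $A$, I would form $S$, embed it in a maximal $\hat S$ from the classification, and extend $A \mapsto \hat A$ one vertex at a time, choosing at each new edge an integer orientation compatible with the already-chosen ratios $d_i/d_j$; at worst this may require passing to a maximal ``bookend'' of one of the infinite families $L_n$ or $L_n^+$. \emph{The main obstacle} is this lifting case analysis: one must check cycle-consistency of $D$ for each maximal symmetric template, confirm nonsymmetry and maximality of the resulting digraph, and deal with the infinite families $L_n$ and $L_n^+$ uniformly, while excluding candidate orientations that satisfy the spectral bound but violate the discrete constraints on the allowed pairs $\{|a_{ij}|,|a_{ji}|\}$.
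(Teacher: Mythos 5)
Your proposal follows essentially the same route as the paper's proof: symmetrize $A$ to a cyclotomic symmetric matrix $S$ with entries in $\sqrt{\N0}$, invoke Greaves' classification (together with \cite{MS2007}) of the maximal such $S$, and then lift each irrational edge back to an integer pair $\{1,a\}$ with cycle-consistency of the ratios $d_i/d_j$ controlling the finitely many orientations --- which is exactly how the paper proceeds, including its treatment of the $L_{4\G}$ case via the cycle condition and its derivation of $A^2=4I$ and the embedding claim from the corresponding facts for $S$. The only differences are cosmetic (your normalization of $D$, and the paper's preliminary use of $2\times 2$ interlacing to pin down the pairs $\{a_{ij},a_{ji}\}$ and dispose of $\widetilde{A}_1'$ before invoking Greaves).
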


By combining our earlier work \cite[Theorems 1, 2, 3]{MS2007} with these results, and separating the charged and noncharged cases, we obtain the following.
 
\begin{corollary} \label{C-1} 
Let $A$ be an $n\times n$ connected, sign symmetric matrix with \textbf{nonnegative} integer entries and maximal with respect to having all its eigenvalues in $[-2,2]$. 
Suppose first that $A$ is uncharged. 
\begin{itemize} 
\item If $A$ is symmetric then it is equivalent to the $1\times 1$ matrix $(2)$, the $2\times 2$ matrix $\widetilde A_1$ (Figure \textup{\ref{F-AAA}}) or to one of the simply laced Dynkin diagrams  $\widetilde A_n\,(n\ge 2)$, $\widetilde D_n\,(n\ge 4)$ , $\widetilde E_6$,  $\widetilde E_7$ or $\widetilde E_8$ (Figure \ref{F-ADE}). 
            
\item If $A$ is nonsymmetric  then it is equivalent to one of the non-simply laced Dynkin diagrams $\widetilde A_{1}'$, $\widetilde{B}_n$, $\widetilde{B}_n^{\ \T},\,(n\ge 2)$, $\widetilde{C}_n\,(n\ge 2)$, $\widetilde{C}_n^{\ \T}\,(n\ge 2)$, $\widetilde{C}_n'\,(n\ge 3)$, $\widetilde{G}_{2}$, $\widetilde{G}_{2}^{\,\T}$,  $\widetilde{F}_{4}$ or $\widetilde{F}_{4}^{\,\T}$. 
See Figures \textup{\ref{F-AAA}} and \textup{\ref{F-ADynk}}. 
\end{itemize}
Now suppose that $A$ is charged.
\begin{itemize} 
\item If $A$ is symmetric then it is equivalent to one of $I_n\,(n\ge 3)$ or $J_n\,(n\ge 2)$ from Figure \textup{\ref{F-Mr}}.

\item If $A$ is nonsymmetric  then it equivalent to the digraph $M_n$ of Figure \textup{\ref{F-Mr}} for some $n\ge 2$, or to its transpose $M_n^{\T}$.
\end{itemize}

\end{corollary}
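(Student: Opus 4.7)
The plan is to reduce the corollary to the two classification results already cited in the introduction: the integer symmetric case \cite[Theorems 1, 2, 3]{MS2007} and Theorem \ref{T-1} for the nonsymmetric symmetrizable case. Both describe matrices that are maximal with spectrum in $[-2,2]$ among \emph{all} integer (symmetrizable) matrices; the task here is to extract from those lists the equivalence classes containing a representative with only nonnegative integer entries, and to verify that the resulting matrices are precisely the listed Dynkin diagrams together with $I_n$, $J_n$, $M_n$ in the charged cases.

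For the symmetric case I would apply \cite{MS2007} directly. A sign-switching equivalence can eliminate negative edge weights exactly when every cycle has positive sign-product, and it cannot alter any vertex charge. So restricting to nonnegative representatives discards from the MS2007 list any charged signed graph with an odd negative cycle or a negative charge. What remains in the uncharged case is precisely $(2)$, $\widetilde{A}_1$, $\widetilde{A}_n$, $\widetilde{D}_n$, $\widetilde{E}_n$, and in the charged case the diagrams $I_n$ and $J_n$ from Figure~\ref{F-Mr}. A short extra check confirms that each of these is maximal within the nonnegative-entry class itself, not merely within all integer symmetric matrices.

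For the nonsymmetric case, by the last sentence of Theorem \ref{T-1} every connected, symmetrizable, nonsymmetric integer matrix with spectrum in $[-2,2]$ sits inside one of the maximal digraphs $B$ listed there. For each such $B$ in turn I would identify, up to equivalence, the connected maximal induced subdigraphs of $B$ whose equivalence class admits a representative with all entries in $\N0$. The diagram $\widetilde{A}_1'$ is itself already of this form; the ladder-shaped $L_n$, $L_n'$, $L_n^\T$ contribute the tree-type affine non-simply-laced diagrams $\widetilde{B}_n$, $\widetilde{B}_n^\T$, $\widetilde{C}_n$, $\widetilde{C}_n^\T$, $\widetilde{C}_n'$; the exceptional $O_4'$ and $S_8^-$ contribute $\widetilde{G}_2$, $\widetilde{G}_2^\T$, $\widetilde{F}_4$, $\widetilde{F}_4^\T$; and the charged parents $O_4^\pm$, $A_2^\pm$, $L_n^+$, $(L_n^+)^\T$ contribute $M_n$ and $M_n^\T$.

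The main obstacle is this case-by-case identification of maximal nonnegative-entry subdigraphs within each parent $B$ of Theorem \ref{T-1}. In each $B$ the negative edges and negative charges form a definite pattern, so the maximal induced subdigraph avoiding them can in principle be read off the figure; but one must carefully account for \emph{all} sign-switching equivalents of $B$ when searching for nonnegative-entry descendants, and then check that what remains is connected and truly maximal among nonnegative-entry matrices (and not strictly contained in another named diagram on the list). Once this is completed for each parent, collecting the outputs recovers exactly the diagrams named in the corollary.
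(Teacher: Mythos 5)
Your overall strategy—quote the symmetric classification from \cite{MS2007}, invoke Theorem \ref{T-1} for the nonsymmetric case, and then hunt inside each maximal parent for the maximal induced subdigraphs equivalent to nonnegative-entry matrices—is the same as the paper's, and your case-by-case attributions (the $L$-family giving $\widetilde{B}_n$, $\widetilde{C}_n$, $\widetilde{C}_n'$ and transposes; $O_4'$ and $S_8^-$ giving $\widetilde{G}_2$ and $\widetilde{F}_4$; $L_n^+$ giving $M_n$) match the paper's, modulo the detail that $A_2^\pm$ and $O_4^\pm$ in fact contribute nothing.

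However, there is a genuine gap at the very first step. The hypothesis of the corollary is that $A$ is \emph{sign symmetric}, not that it is symmetrizable, and sign symmetry alone does not imply symmetrizability (one also needs the cycle condition of Proposition \ref{P-spec}). You pass directly to ``every connected, symmetrizable, nonsymmetric integer matrix\dots sits inside one of the maximal digraphs of Theorem \ref{T-1}'' without ever establishing that your $A$ is symmetrizable, so in principle your argument misses sign-symmetric, non-symmetrizable candidates. The paper closes this gap explicitly: since the entries are nonnegative, Perron--Frobenius shows that a connected digraph properly containing a cycle has spectral radius exceeding that of $\widetilde{A}_n$, namely $2$; hence apart from $\widetilde{A}_n$ itself the digraphs in question contain no cycles, the cycle condition \eqref{E:cycle} holds vacuously, and Proposition \ref{P-spec} then yields symmetrizability. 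You need this (or an equivalent) argument before Theorem \ref{T-1} can be applied. A smaller imprecision: you assert that a sign switching ``cannot alter any vertex charge,'' which is true, but the paper's notion of equivalence also allows global negation $A\mapsto -A$, which flips all charges; this must be taken into account when deciding which charged graphs admit nonnegative representatives.
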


A vital ingredient in the proofs is the generalisation by Greaves \cite{G} of the classification of integer symmetric matrices of spectral radius at most $2$ \cite[Theorems 1, 2, 3]{MS2007} to symmetric matrices whose entries are algebraic integers lying in the compositum of all real quadratic fields.

Corollary \ref{C-1} implies the following.

\begin{corollary}\label{C-3} The symmetrizable nonsymmetric affine Dynkin diagrams (all those in Figure \textup{\ref{F-ADynk}}, and $\widetilde{A}_1'$ in Figure \textup{\ref{F-AAA}}), along with their transposes, are the connected subgraphs of the digraphs of Theorem \textup{\ref{T-1}} that are maximal with respect to the property that some sign switching of the subgraph has all eigenvalues in $[-2,2]$ and no charges or negative edges.
\end{corollary}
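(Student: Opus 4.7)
My plan is to reduce Corollary \ref{C-3} to the nonsymmetric portion of Corollary \ref{C-1}, using the final containment clause of Theorem \ref{T-1} as the bridge between ``good'' subgraphs (those admitting a sign switching with no charges and no negative edges) and nonnegative uncharged matrices whose spectrum lies in $[-2,2]$.

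First I would record two preliminaries. Since sign switching preserves the diagonal, any subgraph whose sign switching has no charges must itself be uncharged; and since any symmetrizable matrix is similar to a symmetric one, Cauchy interlacing automatically places every connected induced subgraph of a digraph from Theorem \ref{T-1} inside $[-2,2]$. Hence the good connected subgraphs of Theorem \ref{T-1} digraphs are exactly the uncharged connected induced subgraphs whose sign-switched forms are connected nonnegative uncharged symmetrizable integer matrices with eigenvalues in $[-2,2]$.

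Next I would establish a correspondence, preserving maximality, between good subgraphs (of Theorem \ref{T-1} digraphs) on one hand and connected nonnegative uncharged nonsymmetric symmetrizable integer matrices with spectrum in $[-2,2]$ on the other. The forward map is sign switching. For the reverse, given such a matrix $C'$, the last sentence of Theorem \ref{T-1} places $C'$ inside some listed maximal digraph, and $C'$ is already in good form. For the maximality transfer, if a good subgraph $B$ with nonneg form $B'$ admits a strict extension $C'$ as a nonneg uncharged matrix with spectrum in $[-2,2]$, then applying Theorem \ref{T-1} again embeds $C'$ inside some listed digraph, producing a strictly larger good subgraph and contradicting the maximality of $B$. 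The converse direction of maximality relies on the observation that a connected nonnegative uncharged matrix admits no nontrivial sign switching that keeps it nonnegative, so extensions on the nonneg side really are extensions in the good-subgraph sense.

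Finally, the nonsymmetric portion of Corollary \ref{C-1} identifies the maximal nonnegative uncharged nonsymmetric symmetrizable integer matrices with spectrum in $[-2,2]$ as precisely $\widetilde{A}_1'$ together with the symmetrizable nonsymmetric affine Dynkin diagrams of Figure \ref{F-ADynk} and their transposes, which is the desired conclusion. I expect the only real subtlety to be in the maximality transfer, specifically in arguing that a nonneg extension of $B'$ actually corresponds to a good-subgraph extension of $B$ inside some Theorem \ref{T-1} digraph; this is handled cleanly by invoking the containment clause of Theorem \ref{T-1} on the extension $C'$ directly, rather than trying to grow within a fixed ambient digraph.
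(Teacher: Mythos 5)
Your overall route is the same as the paper's: the paper offers no separate proof of Corollary \ref{C-3} beyond the remark that it follows from Corollary \ref{C-1}, and your reduction via the containment clause of Theorem \ref{T-1} together with the sign-switching correspondence is the natural way to flesh that remark out. The preliminaries (switching preserves the diagonal, so ``no charges after switching'' forces ``uncharged''; interlacing gives the spectral condition for free) and the forward direction of the correspondence are fine.

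The step that does not go through as written is the final appeal to Corollary \ref{C-1}. That corollary classifies matrices that are maximal within the class of \emph{all} connected sign-symmetric nonnegative integer matrices with spectrum in $[-2,2]$ --- charges allowed --- and then reports which of those maximal matrices happen to be uncharged. Your correspondence, however, delivers maximality within the \emph{uncharged} subclass, and ``maximal and uncharged'' is not a priori the same as ``maximal among uncharged'': an uncharged matrix could conceivably admit no uncharged extension yet sit properly inside a charged matrix such as $M_n$ (Figure \ref{F-Mr}), in which case it would be maximal for the property of Corollary \ref{C-3} without appearing in Corollary \ref{C-1}'s uncharged list. To close this you need two short supplements: (a) each listed affine diagram has $2$ as an eigenvalue of its nonnegative form, so by Perron--Frobenius it admits no connected nonnegative extension whatsoever (charged or not) with all eigenvalues in $[-2,2]$, hence really is maximal for the property in question; and (b) a connected uncharged nonsymmetric nonnegative candidate not on the list either sits properly inside a listed affine diagram (so is not maximal among uncharged ones) or sits inside $M_n$ minus its charged vertex, i.e.\ inside a finite Dynkin diagram $B_k$, which itself extends within the uncharged class to an affine diagram --- again not maximal. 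A more minor omission: your correspondence is set up only for nonsymmetric matrices, so you should also observe that no \emph{symmetric} good subgraph of a Theorem \ref{T-1} digraph can be maximal (equivalently, no simply laced affine Dynkin diagram embeds, up to switching, in a Theorem \ref{T-1} digraph); the paper glosses over this as well, but your explicit restriction to nonsymmetric matrices makes the omission visible.
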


Now we consider the open interval $(-2,2)$, this time with a corollary relating to the finite Dynkin diagrams.

\begin{theorem}\label{T-2}
Let $A$ be a connected symmetrizable integer matrix having all its eigenvalues in the open interval $(-2,2)$.
If $A$ is not symmetric, then $A$ is an induced subgraph of a connected symmetrizable integer  matrix that is maximal with respect to the property of having all its eigenvalues in $(-2,2)$.
The maximal such $A$ are equivalent to one of the following graphs (see Figures \ref{F-FDynk} and \ref{F-A12}): $B_n$ ($n\ge 2$), $C_n := B_n^\T$ ($n\ge 3$), $F_4$, $G_2$, $O_4''$, or $B_2^\pm$.
\end{theorem}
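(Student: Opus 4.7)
The plan is to reduce to Theorem~\ref{T-1} via Cauchy interlacing and then classify by case analysis on the containing Theorem~\ref{T-1} matrix. Let $A$ be connected, symmetrizable, integer, nonsymmetric, with spectrum in $(-2,2)$. Choosing a positive diagonal $D$ such that $DA$ is symmetric, the matrix $S:=D^{1/2}AD^{-1/2}$ is real symmetric with the same spectrum as $A$, and induced subgraphs of $A$ correspond to principal submatrices of $S$. Classical Cauchy interlacing then gives that every induced subgraph of $A$ has spectrum in $(-2,2)$. Since $(-2,2)\subseteq[-2,2]$, Theorem~\ref{T-1} applies to $A$, producing a maximal $M$ from its list with $A$ as an induced subgraph; $M$ must be nonsymmetric (any induced subgraph of a symmetric matrix is symmetric), and the identity $M^{2}=4I$ forces every eigenvalue of $M$ to be $\pm 2$, so $A$ is a proper induced subgraph of $M$.

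For the existence of a maximal extension of $A$ in $(-2,2)$, I would start with $A$ inside such an $M$ and iteratively adjoin vertices of $M$ so long as the spectrum stays in $(-2,2)$. Since each $M$ from Theorem~\ref{T-1} has only finitely many connected induced supergraphs of $A$ to try (or, for the $L_{n}$-type families, since $A$ is finite and any extension staying in $(-2,2)$ is bounded by a finite portion of $L_{n}$ before the spectrum hits $\pm 2$), this terminates in a maximal $A^{*}\subseteq M$ with spectrum still strictly in $(-2,2)$.

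To identify the maximal $A^{*}$, I would work through each nonsymmetric $M$ of Theorem~\ref{T-1} in turn. From the $L_{n}$, $L_{n}^\T$ families (and $\widetilde A_{1}'$) one recovers the affine non-simply-laced Dynkin diagrams $\widetilde B_{m}$, $\widetilde C_{m}$, $\widetilde C_{m}'$, $\widetilde F_{4}$, $\widetilde G_{2}$ of Figure~\ref{F-ADynk} as identified in Corollary~\ref{C-3}; removing the affine vertex from each of these produces the corresponding finite Dynkin diagram $B_{m}$, $C_{m}$, $F_{4}$, or $G_{2}$, which has spectrum strictly in $(-2,2)$. The charged matrix $B_{2}^{\pm}$ arises analogously from $A_{2}^{\pm}$ (or $O_{4}^{\pm}$), and $O_{4}''$ from $O_{4}'$. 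The remaining matrices in Theorem~\ref{T-1}'s list (such as $S_{8}^{-}$, $L_{n}'$, $L_{n}^{+}$, and their transposes) are then verified, up to equivalence, to produce no new maximal types.

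The main obstacle is the explicit verification, for each listed $A^{*}$, that every strict connected nonsymmetric induced supergraph of $A^{*}$ inside the containing $M$ has an eigenvalue at $\pm 2$. For the path-like families $B_{n}$, $C_{n}$ this reduces to a tridiagonal determinant computation of $\det(A^{*}-2I)$ and $\det(A^{*}+2I)$; for the sporadic cases $F_{4}$, $G_{2}$, $O_{4}''$, $B_{2}^{\pm}$ it is a direct calculation. Using the sign-switching and transpose equivalences of Definition~\ref{D:equivalence} to collapse redundant cases is essential in keeping the bookkeeping tractable, especially for the $L_{n}$-type families, where one must track exactly which boundary vertex deletion produces an affine $\widetilde X_m$ and which one produces the finite $X_m$.
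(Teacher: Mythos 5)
Your overall skeleton---pass to the maximal cyclotomic matrices $M$ of Theorem \ref{T-1} via its final sentence, note that $M^2=4I$ forces $A\subsetneq M$, then classify the connected nonsymmetric induced subgraphs of each $M$ with spectrum in $(-2,2)$---is also the paper's skeleton. But the step that does the real work is missing, and one of your attributions is demonstrably wrong. The paper's engine is Corollary \ref{C:removal}: since $M^2=4I$, any induced subgraph of $M$ with all eigenvalues in $(-2,2)$ has at most half as many vertices as $M$. Combined with the requirements of connectedness and nonsymmetry, this pins down the admissible deletions almost completely (e.g.\ for $L_{2r+2}$ one must delete exactly one end vertex and exactly one vertex from each vertical pair, leaving a path which after sign switching is equivalent to $B_m$ or $C_m$). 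Your substitute---recover the affine diagrams of Corollary \ref{C-3} and delete the affine vertex---is not a valid enumeration: Corollary \ref{C-3} classifies only those maximal subgraphs that can be switched to have no negative edges or charges, whereas the subgraphs relevant to Theorem \ref{T-2} may carry negative edges and charges (indeed $O_4''$ and $B_2^{\pm}$ do), and there is no reason a maximal $(-2,2)$-subgraph of $M$ must be obtained by deleting the affine vertex of an affine diagram. You also do not address why subgraphs of $L_n$ containing negative edges give nothing beyond $B_m$ up to equivalence, nor why $L_n^{+}$ contributes nothing new; for the latter the paper needs a genuine argument (the path from the end vertex to a charged vertex has eigenvector $(1,2,\dots,2)$ with eigenvalue $2$), not just an assertion.

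Concretely: you derive $O_4''$ from $O_4'$, but $O_4'$ has edge weight pairs $\{1,3\}$, $\{-1,-1\}$ and $\{1,1\}$ while $O_4''$ has pairs $\{1,2\}$, so $O_4''$ is not an induced subgraph of $O_4'$, nor equivalent to one (Lemma \ref{L-inequiv}); in the paper it arises from $S_8^-$, as do $F_4$ and $\widetilde{F}_4$ (not from the $L_n$ family), while $O_4'$ yields only $G_2$ and $\widetilde{G}_2$. Similarly $B_2^{\pm}$ comes only from $O_4^{\pm}$, not from $A_2^{\pm}$ (whose weight pair is $\{1,3\}$). Finally, your termination argument produces a subgraph maximal only \emph{within} the chosen $M$; that it is maximal among all connected symmetrizable integer matrices follows only once the full classification shows that every candidate lies, up to equivalence, inside one of the listed graphs. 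So the proposal has the right strategy, but the decisive counting lemma is absent and several of the case analyses are incorrect or unsubstantiated.
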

Note the contrast to the symmetric case: not every connected \emph{symmetric} matrix having all its eigenvalues in the open interval $(-2,2)$ is an induced subgraph of a maximal one (see \cite[Theorem 6]{MS2007}).
As with Theorem \ref{T-1}, we can combine Theorem \ref{T-2} with results from \cite{MS2007} to obtain the following.

\begin{corollary}\label{C-5}
Let $A$ be a connected symmetrizable matrix with \textbf{nonnegative} integer entries having all eigenvalues in $(-2,2)$.

Suppose first that $A$ is uncharged.
\begin{itemize}
    \item If $A$ is symmetric then $A$ is equivalent to one of $A_n$ ($n\ge 1$), $D_n$ ($n\ge 4$), $E_6$, $E_7$, or $E_8$.
    \item If $A$ is symmetrizable but nonsymmetric, then $A$ is an induced subgraph of a matrix that is maximal with respect to being connected, symmetrizable, and having all its eigenvalues in $(-2,2)$.
    The maximal such matrices are those equivalent to one of $B_n$, $C_n = B_n^\T$, $F_4$, or $G_2$ (see Figure \ref{F-FDynk}).
\end{itemize}

Now suppose that $A$ is charged.
\begin{itemize}
    \item If $A$ is symmetric then it is equivalent to some $P_n^+$ ($n\ge 1$) (see Figure \ref{F-Mr}).
    \item There are no such $A$ that are nonsymmetric and symmetrizable.
\end{itemize}
\end{corollary}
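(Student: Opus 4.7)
The plan is to deduce Corollary \ref{C-5} from Theorem \ref{T-2} together with the symmetric classification of \cite{MS2007}, splitting into four cases according to whether $A$ is symmetric or not and whether $A$ is charged or not.

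For the two symmetric cases I would appeal directly to \cite{MS2007}. In the symmetric uncharged case this is the classical Smith result recalled in the introduction: a connected graph with spectrum in $[-2,2]$ is an induced subgraph of some simply laced affine Dynkin diagram; the open-interval hypothesis forces it to be a \emph{proper} such induced subgraph, and these are precisely the finite simply laced Dynkin diagrams $A_n$, $D_n$, $E_6$, $E_7$, $E_8$. In the symmetric charged case, the analogous $[-2,2]$-maximals from \cite{MS2007} are $I_n$ and $J_n$, each having $\pm 2$ in its spectrum; enumerating the connected proper induced subgraphs that retain at least one nonzero charge gives exactly the paths $P_n^+$ with a single positively charged endpoint.

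For the two nonsymmetric cases the key input is Theorem \ref{T-2}, by which $A$ embeds, up to equivalence, as an induced subgraph of one of the six maximals $B_n$, $C_n$, $F_4$, $G_2$, $O_4''$, $B_2^{\pm}$. In the nonsymmetric charged case with non-negative entries, only $B_2^{\pm}$ is itself charged; since equivalence preserves diagonal entries, a charged non-negative induced subgraph of $B_2^{\pm}$ could only use the $(+)$-vertex, giving a $1\times 1$ matrix, which is symmetric. Hence no such $A$ exists. In the nonsymmetric uncharged case, $B_2^{\pm}$ is immediately excluded because all its vertices carry nonzero charges, so the ambient maximal is either one of $B_n$, $C_n$, $F_4$, $G_2$, or $O_4''$. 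In the last case, note that $O_4''$ has only four vertices and its $4$-cycle is sign-unbalanced (product of edge signs equals $-1$, a sign-switching invariant), so $O_4''$ itself has no non-negative representative in its equivalence class; any \emph{proper} induced subgraph has at most three vertices and is a tree, and every such path embeds as an induced subgraph of some $B_n$ or $C_n$ (or as a subpath of $F_4$). Finally, $B_n$, $C_n$, $F_4$, $G_2$ are automatically maximal in the non-negative uncharged nonsymmetric class, since they are already maximal in the wider class of Theorem \ref{T-2}.

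The main obstacle is really bookkeeping rather than substance: one must track how the sign-switching and permutation equivalence interacts with the non-negativity hypothesis, and it is the balance argument for the $4$-cycle in $O_4''$ that allows one to trade any embedding into $O_4''$ for an embedding into one of $B_n$, $C_n$, $F_4$, $G_2$, leaving exactly these four families as the maximal elements.
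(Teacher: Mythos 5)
Your proposal is correct and follows essentially the same route as the paper: the paper's proof is simply that Corollary \ref{C-5} ``follows almost immediately from Theorem \ref{T-2}'' (combined with the symmetric classification from \cite{MS2007}), once one notes that $O_4''$ and $B_2^{\pm}$ contribute nothing beyond $(1)$ and subgraphs of $F_4$. Your write-up just makes explicit the details the paper leaves to the reader, such as the sign-unbalanced $4$-cycle argument showing $O_4''$ has no nonnegative representative.
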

\section{Definitions and preparatory lemmas}

\begin{definition}\label{D:symmetrizable}

An $n\times n$ real matrix $B$ is said to be \textbf{symmetrizable} if there is a real diagonal matrix $D = \diag(d_1,\dots,d_n)$ with each $d_i>0$ such that \begin{equation}\label{E:symmetrizable}
S=D^{-1}BD
\end{equation}
is symmetric.
We then call $S$ the \textbf{symmetrization} of $B$.

\end{definition}

Thus a symmetrizable matrix can be transformed to a symmetric matrix by a diagonal change of basis (and with positive scaling of each basis vector).
In particular, all the eigenvalues of a symmetrizable matrix are real, since they equal those of the symmetrization.
If $B$ is symmetrizable, then we shall see that its symmetrization $S$ is unique (although there will be choice for $D$ in \eqref{E:symmetrizable}).
There are at least two alternative equivalent definitions in the literature \cite{Carter2005,K}, but Definition \ref{D:symmetrizable} is possibly the most intuitive.

\begin{lemma}\label{L:transpose}
If $B$ is symmetrizable, then so is $B^\T$, and the symmetrizations of $B$ and $B^\T$ are the same.
\end{lemma}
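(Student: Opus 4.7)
The plan is to transpose the defining equation for the symmetrization of $B$. Starting from $S = D^{-1} B D$ with $S$ symmetric and $D = \diag(d_1, \dots, d_n)$ having each $d_i > 0$, I will exploit the fact that any diagonal matrix $D$ satisfies $D^\T = D$ (and hence $(D^{-1})^\T = D^{-1}$) to convert this directly into a symmetrization statement for $B^\T$, using the same symmetric matrix $S$.

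Concretely, the key step is the one-line computation
\[
S \;=\; S^\T \;=\; (D^{-1} B D)^\T \;=\; D^\T\, B^\T\, (D^{-1})^\T \;=\; D\, B^\T\, D^{-1} \;=\; (D^{-1})^{-1}\, B^\T\, (D^{-1}).
\]
Now $D^{-1} = \diag(1/d_1, \dots, 1/d_n)$ is again a diagonal matrix with positive entries, so this identity exhibits $D^{-1}$ as a symmetrizer for $B^\T$, and the resulting symmetric matrix is the original $S$. This immediately yields both claims of the lemma: $B^\T$ is symmetrizable, and at least one choice of symmetrizer for $B^\T$ produces the same symmetric matrix $S$ as the chosen symmetrizer for $B$.

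For the stronger reading that the symmetrizations coincide as sets (or, once the uniqueness of $S$ foreshadowed in the discussion preceding the lemma is established, as the unique symmetrization), I would run the same argument in reverse: given any positive diagonal $E$ with $E^{-1} B^\T E$ symmetric, the identical computation shows that $E^{-1}$ symmetrizes $(B^\T)^\T = B$ to the same symmetric matrix. Thus the family of symmetric matrices arising as $D^{-1} B D$ equals the family arising as $E^{-1} B^\T E$, so no symmetrization is lost or gained upon transposition. The only real observation driving the proof is that conjugation by a diagonal $D$ becomes conjugation by $D^{-1}$ after transposing, which is immediate; there is no substantive obstacle in this lemma.
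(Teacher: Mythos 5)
Your computation is exactly the paper's proof: transpose $S=D^{-1}BD$ and use $D^\T=D$ to see that $D^{-1}$ symmetrizes $B^\T$ to the same symmetric matrix $S$. Correct and essentially identical in approach (your extra remark about running the argument in reverse is fine but not needed, since the paper establishes uniqueness of the symmetrization separately via \eqref{E:sij2} and \eqref{E:sgnsij}).
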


\begin{proof}
Given \eqref{E:symmetrizable}, we transpose to get $S = D^\T B^\T(D^\T)^{-1}$.
\end{proof}

If $B=(b_{ij})$ is an $n\times n$ matrix, $D = \diag(d_1,\dots,d_n)$ with each $d_i>0$ and $S = D^{-1}BD$ is symmetric, then $d_i^{-1}b_{ij}d_j = d_j^{-1}b_{ji}d_i$ for $1\le i\le n$, $1\le j\le n$.
Hence
\begin{equation}\label{E:bd2}
    b_{ij}d_j^2 = b_{ji} d_i^2 \qquad (1\le i\le n,\ 1\le j\le n)\,.
\end{equation}

We call \eqref{E:bd2} the \textbf{balancing condition}.

\begin{definition}\label{D:signsymmetry}
A real $n\times n$ matrix $B=(b_{ij})$ is called \textbf{sign symmetric} if
\begin{equation}\label{E:signsymmetry}
    \sgn(b_{ij})=\sgn(b_{ji})
\end{equation}
holds for all $i$, $j\in\{1,\dots,n\}$.
(Here $\sgn(x)$ is the sign of $x$, either $1$, $-1$, or $0$.)
\end{definition}

\begin{lemma}\label{L:signsymmetry}
Any symmetrizable matrix is sign symmetric.
\end{lemma}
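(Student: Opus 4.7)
The proof will be essentially a one-line consequence of the balancing condition \eqref{E:bd2} that was derived in the paragraph immediately preceding the lemma. The plan is to simply observe that $d_i^2>0$ for every $i$, so multiplication of a real number by $d_i^2$ or $d_j^2$ leaves its sign unchanged.

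More concretely, I would argue as follows. Since $B$ is symmetrizable, pick a diagonal $D=\diag(d_1,\dots,d_n)$ with each $d_i>0$ such that $D^{-1}BD$ is symmetric; then \eqref{E:bd2} gives $b_{ij}d_j^2=b_{ji}d_i^2$ for all $i,j$. Because $d_j^2>0$ and $d_i^2>0$, we have $\sgn(b_{ij})=\sgn(b_{ij}d_j^2)=\sgn(b_{ji}d_i^2)=\sgn(b_{ji})$, which is exactly the condition \eqref{E:signsymmetry}.

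There is no real obstacle here: the content of the lemma has already been packaged into \eqref{E:bd2}, and the only remaining ingredient is the (trivial) fact that scaling by a positive real preserves sign. The main value of stating the lemma separately is to name the conclusion so that later parts of the paper (e.g.\ the characterisation theorems, where sign symmetry is used to restrict which digraphs can arise) can cite it directly.
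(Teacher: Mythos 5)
Your proof is correct and is exactly the paper's argument: the paper's proof reads in full ``This is an immediate consequence of \eqref{E:bd2}'', and your write-up simply makes explicit the trivial step that multiplying by the positive quantities $d_i^2$, $d_j^2$ preserves sign. Nothing is missing and nothing differs in substance.
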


\begin{proof}
This is an immediate consequence of \eqref{E:bd2}.
\end{proof}

\begin{lemma}
If $B$ is a symmetrizable integer matrix, then its symmetrization has all entries in $\sqrt{\N0}=\{0,\ 1,\ -1,\ \sqrt{2},\ -\sqrt{2}, \sqrt{3},\ -\sqrt{3}, \dots \}$.
\end{lemma}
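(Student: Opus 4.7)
The plan is to compute $s_{ij}^2$ directly in terms of the entries of $B$ and use the balancing condition to show it is a nonnegative integer. Since $s_{ij}$ is real (as $S$ is a real symmetric matrix), this will force $s_{ij} \in \sqrt{\N0}$.

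More precisely, from \eqref{E:symmetrizable} the $(i,j)$ entry of $S$ is
\[
s_{ij} = d_i^{-1} b_{ij} d_j.
\]
I would then square to get $s_{ij}^2 = b_{ij}^2 \, d_j^2/d_i^2$, and use the balancing condition \eqref{E:bd2} in the form $d_j^2/d_i^2 = b_{ji}/b_{ij}$ (when $b_{ij}\ne 0$; the case $b_{ij}=0$ is handled separately, as then also $s_{ij}=0$) to obtain
\[
s_{ij}^2 = b_{ij}\, b_{ji}.
\]
This is an integer, and by Lemma \ref{L:signsymmetry} it is nonnegative. Hence $s_{ij}$ is a real number whose square is a nonnegative integer, which is precisely the condition $s_{ij} \in \sqrt{\N0}$.

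There is no real obstacle here; the only minor care needed is the division by $b_{ij}$, which is handled by splitting into the cases $b_{ij}=0$ and $b_{ij}\ne 0$ (in the former, sign symmetry forces $b_{ji}=0$ too, so $s_{ij}=0=s_{ij}^2\in\N0$).
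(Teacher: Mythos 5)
Your proof is correct and follows essentially the same route as the paper: both reduce to the identity $s_{ij}^2 = b_{ij}b_{ji}$ and then invoke sign symmetry to conclude this is in $\N0$. The paper obtains that identity slightly more directly by writing $s_{ij}^2 = s_{ij}s_{ji} = (d_i^{-1}b_{ij}d_j)(d_j^{-1}b_{ji}d_i)$, using the symmetry of $S$, which avoids your case split on whether $b_{ij}=0$; but this is a cosmetic difference only.
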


\begin{proof}
Suppose that $B=(b_{ij})$ is symmetrizable, and that positive diagonal $D = \diag(d_1,\dots,d_n)$ and symmetric $S=(s_{ij})$ satisfy $S=D^{-1}BD$.
We have
\begin{equation}\label{E:sij2}
s_{ij}^2 = s_{ij}s_{ji} = (d_{i}^{-1}b_{ij}d_j)(d_j^{-1}b_{ji}d_i) =b_{ij}b_{ji}\,,
\end{equation}
and by sign symmetry of $B$ this is in $\N0$.
\end{proof}

Regardless of whether or not the $b_{ij}$ are integers, combining \eqref{E:sij2} with
\begin{equation}\label{E:sgnsij}
\sgn(s_{ij}) = \sgn(d_i^{-1}b_{ij}d_j)=\sgn(b_{ij})
\end{equation}
we see that the symmetrization $S$ is uniquely determined by $B$.

A \textbf{permutation matrix} is a square matrix that has a single entry equal to $1$ in each row and column, and all other entries are zero.
A \textbf{signed permutation matrix} is like a permutation matrix but with the nonzero entries allowed to be either $1$ or $-1$.
(These matrices are the elements of the orthogonal group $O_n(\Z)$.)

\begin{definition}\label{D:equivalence}
Two square matrices $A$ and $B$ are called \textbf{equivalent} if there is a signed permutation matrix $P$ such that $P^{-1}AP = P^\T A P = \pm B$.

If the signed permutation matrix $P$ is diagonal, then the transformation $A\mapsto P^\T AP$ is called a \textbf{sign switching}.
\end{definition}

\begin{lemma}\label{L:equivalent}
If an integer matrix $B$ is symmetrizable, then so is any matrix equivalent to $B$.
\end{lemma}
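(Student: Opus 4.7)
The plan is to conjugate the symmetrizing diagonal matrix $D$ of $B$ by the signed permutation $P$ witnessing the equivalence, and verify that this gives a positive diagonal matrix that symmetrizes $A$.

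By the definition of equivalence, $A = \varepsilon P B P^\T$ for some $\varepsilon \in \{\pm 1\}$ and some signed permutation matrix $P$. Since $B$ is symmetrizable, Definition \ref{D:symmetrizable} furnishes a positive diagonal $D = \diag(d_1,\dots,d_n)$ such that $S := D^{-1} B D$ is symmetric, whence $B = D S D^{-1}$. I would then set $D' := P D P^\T$ and claim that $D'$ is positive diagonal and symmetrizes $A$.

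To verify that $D'$ is positive diagonal, I would write $P$ in the form $P_{i,\sigma(i)} = \pm 1$ for a permutation $\sigma$ (all other entries zero), and compute
\[
(P D P^\T)_{ij} = \sum_{k} P_{ik} d_k P_{jk},
\]
which vanishes for $i \neq j$ (since $\sigma(i)\neq \sigma(j)$ forces disjoint supports) and equals $P_{i,\sigma(i)}^2 \, d_{\sigma(i)} = d_{\sigma(i)} > 0$ for $i = j$. In particular $(D')^{-1} = P D^{-1} P^\T$. Then using $P^\T P = I$ twice, I would compute
\[
(D')^{-1} A D' \;=\; (P D^{-1} P^\T)(\varepsilon P B P^\T)(P D P^\T)
\;=\; \varepsilon\, P D^{-1} B D P^\T \;=\; \varepsilon\, P S P^\T,
\]
which is symmetric because $S$ is. Hence $A$ is symmetrizable with symmetrizing diagonal $D'$.

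There isn't really a main obstacle here: once one guesses the correct diagonal $D' = PDP^\T$, the verification is a short computation using only $P^\T P = I$ and the fact that signed permutations have entries $\pm 1$, so their squares are $1$ (which is what makes $PDP^\T$ positive rather than merely diagonal). The integer hypothesis on $B$ is not actually used in the argument; it only ensures that the equivalent matrix $A = \varepsilon P B P^\T$ also has integer entries, keeping us inside the class of interest.
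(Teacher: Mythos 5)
Your proof is correct and follows essentially the same route as the paper: conjugate the symmetrizing diagonal $D$ by the signed permutation $P$, observe that the result is still positive diagonal (the signs square to $1$), and note that negation preserves symmetry of the symmetrization. The paper treats the sign $\pm$ as a separate one-line remark rather than folding it into the formula, but this is a cosmetic difference only.
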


\begin{proof}
Suppose that $D^{-1}BD=S$, where $D$ is diagonal with positive diagonal entries, and let $P$ be any signed permutation matrix of the same size.
Then 
we compute that 
$P^{-1}DP=P^\T DP$ is also diagonal with positive diagonal entries.
Since $(P^{-1}D^{-1}P)(P^{-1}BP)(P^{-1}DP)=P^\T SP$ is symmetric, we see that $P^{-1}BP$ is symmetrizable.
Moreover $D^{-1}(-B)D=-S$ is symmetric, so that the negative of a symmetrizable matrix is symmetrizable.
Hence any matrix equivalent to $B$ is symmetrizable.
\end{proof}

\begin{definition}\label{D:graphs}
Given a digraph $A$, an \textbf{induced subgraph} is a digraph formed from some subset of the vertices of $A$, with all charges and edge weights inherited from those in $A$.
In terms of matrices, an induced subgraph is produced by deleting some subset of the rows and deleting the same subset of the columns.

A digraph $A=(a_{ij})$ is \textbf{connected} if for any pair of vertices $x$ and $y$, there is a sequence of vertices
\[
v_1 = x,\ v_2,\ \dots,\ v_r = y
\]
such that for $i = 1$, \dots, $r-1$, there holds $a_{v_i v_{i+1}} \ne 0$.
(This corresponds to the usual definition of being \emph{strongly} connected, but for symmetrizable matrices the two properties coincide, since such matrices have sign symmetry.)

If $P$ is a property that a digraph might or might not have, then we say that $A$ is \textbf{maximal} with respect to that property if:
\begin{itemize}
    \item $A$ is connected and has property $P$;
    \item if $A$ is an induced subgraph of some strictly larger connected digraph $B$, then $B$ does not have property $P$.
\end{itemize}

The \textbf{connected components} of a digraph $A$ are the maximal connected induced subgraphs of $A$.
\end{definition}

\begin{lemma}
\label{L-dint} If $B$ is an $n\times n$  symmetrizable integer matrix, then  we can choose $D$ in \eqref{D:symmetrizable} to have positive square-roots of integers for all its diagonal entries.
\end{lemma}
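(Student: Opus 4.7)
The plan is to start from any symmetrization $D$ guaranteed by the hypothesis (so $D^{-1}BD = S$ is symmetric), and then rescale $D$, one connected component of the digraph of $B$ at a time, until every diagonal entry is the square root of a positive integer. Throughout I would use the balancing condition \eqref{E:bd2}: $b_{ij}d_j^2 = b_{ji}d_i^2$.

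The first step is to observe that multiplying all $d_i$ for $i$ in a single connected component by a common positive constant $\lambda$ produces another valid symmetrizing diagonal matrix: the factors of $\lambda$ cancel on intra-component entries of $D^{-1}BD$, and inter-component entries of $B$ vanish by the definition of connected component. Hence the argument reduces to a single connected component.

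Fix such a component on vertices $v_1, \dots, v_m$ and pick a spanning tree of it rooted at $v_1$. For any other vertex $v_k$, write the unique tree path as $v_1 = u_0, u_1, \dots, u_r = v_k$; each consecutive pair has $b_{u_l u_{l+1}} \ne 0$, hence $b_{u_{l+1} u_l} \ne 0$ by Lemma \ref{L:signsymmetry}, and the balancing condition gives
\[
\frac{d_{u_{l+1}}^2}{d_{u_l}^2} \;=\; \frac{b_{u_{l+1} u_l}}{b_{u_l u_{l+1}}} \;\in\; \mathbb{Q}_{>0}.
\]
Multiplying along the path shows $d_{v_k}^2/d_{v_1}^2 = p_k/q_k$ for some coprime positive integers $p_k, q_k$. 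Taking $N$ to be the least common multiple of $q_1, \dots, q_m$ and rescaling the component so that $d_{v_1}^2 = N$ makes every $d_{v_k}^2 = N p_k/q_k$ a positive integer. Applying the same recipe to each connected component finishes the argument.

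No real obstacle arises. The only thing one might worry about---consistency of the ratios $d_{v_k}^2/d_{v_1}^2$ along different paths, or with the balancing condition imposed by non-tree edges of the digraph---is automatic, since the $D$ provided by the hypothesis already satisfies all of these equations simultaneously, and a uniform rescaling within a component preserves them. The whole argument is thus the balancing condition combined with clearing denominators.
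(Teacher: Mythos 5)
Your proof is correct and follows essentially the same route as the paper's: both use the balancing condition \eqref{E:bd2} to show that the ratios $d_i^2/d_k^2$ are rational within each connected component, then rescale each component by a single positive constant to clear denominators, noting that such a per-component rescaling preserves the symmetrization because inter-component entries vanish. Your spanning-tree and least-common-multiple bookkeeping just makes explicit what the paper summarizes as ``considering a chain of such identities'' and choosing ``an appropriate positive integer $N$.''
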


\begin{proof} 
Writing $B=(b_{ij})$, $D=\diag(d_1,\dots,d_n)$ and $S=(s_{ij})$, we have from \eqref{E:bd2} that
\begin{equation}\label{E:symmcomp}
d_j^2=(b_{ji}/b_{ij})d_i^2 \textrm{ when }b_{ij}\ne 0\,.
\end{equation}
Thus for all indices $i$, $k$ in the same connected component
of $B$, we see by considering a chain of such identities that $d_i^2/d_k^2$ is rational. 
Thus on fixing some $i$ in this component, for an appropriate positive integer $N$ we can  scale by $N/d_i$ all the $d_k$ in this component (i.e., replace $d_k$ by $Nd_k/d_i$) to make them all square-roots of integers.
Doing this for all connected components of $B$ makes all the $d_i^2$ positive integers.
The relation $D^{-1}BD=S=(s_{ij})$ is preserved by this scaling: $s_{ij} = d_i^{-1}b_{ij}d_j$, and either $s_{ij}=b_{ij}=0$ or $i$ and $j$ lie in the same connected component, and we see that the scaling preserves $s_{ij}$.
\end{proof}

The above proof works with the connected components of $B$.
We wish to dig deeper into the structure of $B$, and break these components up in such a way that the $d_i$ are constant on each piece.

\begin{definition}\label{D:symcomp}
Let $B$ be a symmetrizable $n\times n$ matrix.
Define $B^*$, a symmetric $n\times n$ matrix, by
\[
(B^*)_{ij} = \begin{cases}
b_{ij} & \textrm{if } b_{ij}=b_{ji}\,, \\
0 & \textrm{if } b_{ij}\ne b_{ji}\,.
\end{cases}
\]
(Thus we set to zero any entries of $B$ that were revealing asymmetry of $B$.
If $B$ is symmetric then $B^*=B$.)
The \textbf{symmetric components} of $B$ are defined to be the induced subgraphs of $B$ corresponding to the connected components of $B^*$.
\end{definition}

\begin{lemma}\label{L:symcomp}
Suppose that $B$ is a symmetrizable matrix, with $D^{-1}BD$ symmetric, where $D = \diag(d_1,\dots,d_n)$ (each $d_i>0$).
Then
\begin{itemize}
    \item[(i)] the $d_i$ are constant on the symmetric components of $B$;
    \item[(ii)] let $C_1$, $C_2$ be any two distinct symmetric components of $B$; then for $i\in C_1$, $j\in C_2$ with $b_{ij}\ne 0$, the ratio $b_{ji}/b_{ij}$ is independent of $i$ and $j$.
\end{itemize}
\end{lemma}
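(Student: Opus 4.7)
The plan is to apply the balancing condition \eqref{E:bd2} repeatedly, since it directly controls the ratios $d_j^2/d_i^2$ whenever a nonzero entry of $B$ is present.

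For part (i), I would argue by propagation along paths inside a symmetric component. If $i$ and $j$ lie in the same connected component of $B^*$, then by definition there is a path $i = v_0, v_1, \dots, v_r = j$ with $(B^*)_{v_k v_{k+1}} \ne 0$ for each $k$; unpacking the definition of $B^*$, this means $b_{v_k v_{k+1}} = b_{v_{k+1} v_k}$ and both are nonzero. Feeding each such pair into the balancing condition $b_{v_k v_{k+1}} d_{v_{k+1}}^2 = b_{v_{k+1} v_k} d_{v_k}^2$ and cancelling the common nonzero factor yields $d_{v_{k+1}}^2 = d_{v_k}^2$. Since the $d_i$ are positive, $d_{v_{k+1}} = d_{v_k}$, and chaining along the path gives $d_i = d_j$. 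Singleton symmetric components are trivial.

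For part (ii), let $C_1 \ne C_2$ be two symmetric components and pick $i \in C_1$, $j \in C_2$ with $b_{ij} \ne 0$. First I would observe that necessarily $b_{ij} \ne b_{ji}$: if $b_{ij} = b_{ji} \ne 0$, then $(B^*)_{ij} \ne 0$ and $i$, $j$ would belong to the same component of $B^*$, contradicting $C_1 \ne C_2$. By Lemma \ref{L:signsymmetry}, $b_{ji}$ has the same sign as $b_{ij}$, so $b_{ji} \ne 0$ as well. The balancing condition then rearranges to $b_{ji}/b_{ij} = d_j^2/d_i^2$, and applying part (i) the right-hand side equals $d_{C_2}^2/d_{C_1}^2$, where $d_{C_k}$ denotes the common value of $d_v$ for $v \in C_k$. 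This ratio is independent of $i$ and $j$.

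There is no real obstacle here: both parts follow mechanically from \eqref{E:bd2} once one observes that $B^*$-edges produce trivial ratios while inter-component nonzero entries of $B$ produce a fixed nontrivial ratio. The one point requiring a small verification is the observation that distinct symmetric components can only be joined by genuinely asymmetric pairs $b_{ij} \ne b_{ji}$, which is what makes the ratio in (ii) well-defined and nonunit.
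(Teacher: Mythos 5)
Your proof is correct and follows exactly the route the paper takes: the paper's own proof simply states that both parts are immediate from the balancing condition \eqref{E:bd2} (together with (i) for part (ii)), and your write-up is just a careful expansion of that, propagating $d_{v_k}=d_{v_{k+1}}$ along paths in $B^*$ for (i) and reading off $b_{ji}/b_{ij}=d_j^2/d_i^2$ for (ii). The extra observation that inter-component edges must satisfy $b_{ij}\ne b_{ji}$ is a nice bonus but not needed for the statement as given.
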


\begin{proof}
The first part is immediate from \eqref{E:bd2}.
Then using \eqref{E:bd2} again, along with (i), the second part follows.
\end{proof}

\begin{lemma}\label{L:balancing}
An $n\times n$ matrix $B=(b_{ij})$ is symmetrizable if and only if there exist positive real numbers $d_1$, \dots, $d_n$ such that the balancing condition \eqref{E:bd2} holds.
An $n\times n$ \textbf{integer} matrix $B = (b_{ij})$ is symmetrizable if and only if there exist positive real numbers $d_1$, \dots, $d_n$ such that $d_i^2$ is an integer for each $i$, and the balancing condition \eqref{E:bd2} holds.
\end{lemma}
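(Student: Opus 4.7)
The plan is to assemble both biconditionals from material already developed in the excerpt; the statement is essentially a packaging lemma rather than a result requiring new ideas. For the first part, I would prove both directions directly from Definition \ref{D:symmetrizable}. The forward implication was already carried out in the discussion immediately preceding \eqref{E:bd2}: writing out entrywise the symmetry of $S = D^{-1}BD$ gives $d_i^{-1}b_{ij}d_j = d_j^{-1}b_{ji}d_i$, which upon multiplying by $d_i d_j$ is precisely the balancing condition. For the reverse implication, I would simply run this computation backwards: given positive reals $d_1,\dots,d_n$ satisfying \eqref{E:bd2}, define $S = D^{-1}BD$ with $D = \diag(d_1,\dots,d_n)$; then its $(i,j)$ and $(j,i)$ entries are $d_i^{-1}b_{ij}d_j$ and $d_j^{-1}b_{ji}d_i$, and equality of these two is exactly the balancing condition divided by $d_i d_j$. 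So $S$ is symmetric and $B$ is symmetrizable.

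For the second part, restricted to integer $B$ with the additional requirement $d_i^2 \in \mathbb{Z}$, the ``if'' direction is an immediate specialisation of the first part, since positive reals whose squares happen to be integers are still positive reals, and the balancing condition is the same. The ``only if'' direction is exactly the content of Lemma \ref{L-dint}, which has already been established by scaling within each connected component of $B$: the relations \eqref{E:symmcomp} force the ratios $d_i^2/d_k^2$ within a component to be rational, and one then multiplies each component's diagonal entries by a common factor to clear denominators while preserving $S = D^{-1}BD$.

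No step presents a genuine obstacle; the only thing to flag carefully is that the integer refinement concerns only the choice of $D$, not the entries of the symmetrization $S$, which in general lie only in $\sqrt{\mathbb{N}_0}$ (as shown in the earlier lemma). So I would be explicit that the conclusion of the second part does not claim that $S$ itself has integer entries, merely that one may take $d_i^2 \in \mathbb{Z}$ when $B \in M_n(\mathbb{Z})$.
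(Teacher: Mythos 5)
Your proof is correct and follows the same route as the paper: the first equivalence is the entrywise unwinding of the symmetry of $D^{-1}BD$ run in both directions, and the integer refinement is obtained by invoking Lemma \ref{L-dint}, exactly as the paper does. Your closing remark that the integrality claim concerns $D$ and not the entries of $S$ is a sensible clarification but introduces nothing beyond the paper's argument.
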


\begin{proof}
The balancing condition \eqref{E:bd2} is equivalent to the existence of a diagonal matrix $D = \diag(d_1,\dots,d_n)$ such that $D^{-1}BD$ is symmetric.
For the integer case, use also Lemma \ref{L-dint}.
\end{proof}

\begin{definition}
An $n\times n$ real matrix $B$ is said to satisfy the \textbf{cycle condition} if 
\begin{equation}\label{E:cycle}
b_{i_1i_2}b_{i_2i_3} \cdots b_{i_{t-1}i_t}b_{i_ti_1} = b_{i_2i_1}b_{i_3i_2} \cdots b_{i_ti_{t-1}}b_{i_1 i_t}
\end{equation}
holds for all sequences  $i_1$, $i_2$, \dots, $i_t$ of elements of $\{1, \dots, n\}$.
\end{definition}

\begin{lemma} \label{L-cycle}
Any symmetrizable matrix satisfies the cycle condition.
\end{lemma}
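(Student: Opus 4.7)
The plan is to derive the cycle condition directly from the balancing condition \eqref{E:bd2}. Given that $B$ is symmetrizable, Lemma \ref{L:balancing} supplies positive reals $d_1,\dots,d_n$ with $b_{ij}d_j^2 = b_{ji}d_i^2$ for every pair $(i,j)$. First I would apply this identity to each consecutive pair in the cyclic sequence $i_1,i_2,\dots,i_t,i_1$, obtaining the $t$ equalities
\[
b_{i_k i_{k+1}} d_{i_{k+1}}^2 = b_{i_{k+1} i_k} d_{i_k}^2 \qquad (k=1,\dots,t,\ \text{indices taken mod } t).
\]

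Next I would multiply all $t$ of these together. On the left-hand side this produces
\[
\Bigl(\prod_{k=1}^{t} b_{i_k i_{k+1}}\Bigr)\Bigl(\prod_{k=1}^{t} d_{i_{k+1}}^2\Bigr),
\]
and on the right-hand side it produces
\[
\Bigl(\prod_{k=1}^{t} b_{i_{k+1} i_k}\Bigr)\Bigl(\prod_{k=1}^{t} d_{i_k}^2\Bigr).
\]
Since the index $k+1$ runs through the same cyclic set as $k$, the products $\prod_{k} d_{i_{k+1}}^2$ and $\prod_{k} d_{i_k}^2$ are identical, and both are positive (hence nonzero). Cancelling this common factor yields precisely \eqref{E:cycle}.

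There is essentially no obstacle here; the only mild care needed is that the cancellation requires the $d_i^2$ to be nonzero, which is immediate from the requirement $d_i>0$ in Definition \ref{D:symmetrizable}. No case analysis is required for vanishing $b_{ij}$: if some $b_{i_k i_{k+1}}=0$ then by Lemma \ref{L:signsymmetry} we also have $b_{i_{k+1} i_k}=0$, so both sides of \eqref{E:cycle} vanish, consistent with the identity.
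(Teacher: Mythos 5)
Your proof is correct and is essentially identical to the paper's: both multiply the balancing condition \eqref{E:bd2} over the consecutive pairs of the cyclic sequence and cancel the common (nonzero) product of the $d_{i_k}^2$. The extra remark about vanishing $b_{ij}$ is harmless but unnecessary, since the cancellation argument already covers that case.
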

\begin{proof}
Suppose that $B$ is an $n\times n$ symmetrizable matrix.
Take any $i_1$, $i_2$, \dots, $i_t\in\{1, \dots, n\}$.
Multiplying all the equations (\ref{E:bd2}) together for $(i,j) = (i_1,i_2)$, $(i_2,i_3)$, \dots, $(i_{t-1}, i_t)$, $(i_t,i_1)$, and then dividing by the (nonzero) product of all the $d_{i_j}^2$ we get \eqref{E:cycle}.
\end{proof}

Thus any symmetrizable matrix is sign symmetric and satisfies the cycle condition.
It is a beautiful well-known fact (see, for example, \cite[Corollary 15.15]{Carter2005}) that these two conditions together are sufficient for a matrix to be symmetrizable.

\begin{proposition} \label{P-spec}An $n\times n$ real matrix $B$ is symmetrizable if and only if it is sign symmetric and satisfies the cycle condition.
\end{proposition}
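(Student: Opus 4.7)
The forward implication has been established by Lemmas \ref{L:signsymmetry} and \ref{L-cycle}, so the plan is to prove the converse. Given that $B$ is sign symmetric and satisfies the cycle condition, I would construct positive reals $d_1, \ldots, d_n$ satisfying the balancing condition \eqref{E:bd2} and then invoke Lemma \ref{L:balancing}. First reduce to the connected case (Definition \ref{D:graphs}): since $b_{ij} = b_{ji} = 0$ whenever $i,j$ lie in different connected components, the balancing condition is vacuous across components, and one can solve for each component independently, taking $d_i=1$ for all $i$ in any component where a solution is not otherwise specified.

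Now assume $B$ is connected. Fix vertex $1$ and set $d_1 = 1$. For any other vertex $j$, choose a path $1 = v_0, v_1, \ldots, v_k = j$ with each $b_{v_t v_{t+1}} \neq 0$, which exists by connectedness (and by sign symmetry we also have $b_{v_{t+1} v_t} \neq 0$), and define
\[
d_j^2 = \prod_{t=0}^{k-1}\frac{b_{v_{t+1}v_t}}{b_{v_t v_{t+1}}}.
\]
By sign symmetry each factor is strictly positive, so we can take $d_j$ to be the positive square root. The key step is to verify that $d_j^2$ is independent of the chosen path. Given two paths $P_1$ and $P_2$ from $1$ to $j$, concatenate $P_1$ with the reverse of $P_2$ to form a closed walk from $1$ to $1$; applying the cycle condition \eqref{E:cycle} to the vertex sequence of this closed walk, together with sign symmetry (which ensures every factor is nonzero so we may divide), yields that the product of the ratios $b_{v_{t+1}v_t}/b_{v_t v_{t+1}}$ around the loop equals $1$, forcing the two path-defined values of $d_j^2$ to agree.

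Finally, one checks that the $d_i$ satisfy the balancing condition: if $b_{ij} = 0$ then both sides of \eqref{E:bd2} vanish by sign symmetry; if $b_{ij} \neq 0$, a path from $1$ to $i$ extended by the edge $i\to j$ is a valid path from $1$ to $j$, and comparing the defining formulas for $d_i^2$ and $d_j^2$ gives $d_j^2/d_i^2 = b_{ji}/b_{ij}$, i.e.\ \eqref{E:bd2}. The only delicate point is the well-definedness of $d_j^2$, and this is where the cycle condition is essential; it matters that the condition is stated for arbitrary sequences rather than only for simple cycles, since the closed walk obtained by gluing two paths may revisit vertices.
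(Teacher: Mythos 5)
Your proof is correct and follows essentially the same route as the paper's: reduce to the connected case, set $d_1=1$, propagate the values $d_j^2$ as products of ratios $b_{v_{t+1}v_t}/b_{v_tv_{t+1}}$ along paths from vertex $1$, and use the cycle condition to ensure consistency before invoking Lemma \ref{L:balancing}. Your treatment of well-definedness (concatenating two paths into a closed walk and noting that the cycle condition must hold for arbitrary, possibly repeating, sequences) is slightly more explicit than the paper's, but it is the same argument.
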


\begin{proof}
We 
know from Lemmas \ref{L:signsymmetry} and \ref{L-cycle}
that any symmetrizable matrix is sign symmetric and satisfies the cycle condition.

Now suppose that $B=(b_{ij})$ satisfies both these conditions.
For simplicity suppose that $B$ is connected (else treat each component separately).
Set $d_1=1$.
For each neighbour $i$ of vertex $1$, sign symmetry gives both $b_{1i}$ and $b_{i1}$ non-zero, so that we can define 
$
d_i = \sqrt{b_{i1}/b_{1i}}
$.
Then the balancing condition holds when $j=1$ (if any $b_{1i}$ in \eqref{E:bd2} is zero, then both sides are zero).
Next for neighbours $k$ of neighbours $i$ of $1$, define $d_k$ by $d_k = d_i\sqrt{b_{ki}/b_{ik}}$.
By the cycle condition, any vertex $k$ for which $d_k$ has been defined more than once will have received the same value each time.
The balancing condition now holds for $j=1$ and for $j$ any neighbour of $1$.
And so on, we grow our labelling to all the vertices (consistently, thanks to \eqref{E:cycle}), and produce positive numbers $d_i$  such that \eqref{E:bd2} holds.
By Lemma \ref{L:balancing}, $B$ is symmetrizable.
\end{proof}

The above proof also yields a method to test the cycle condition in practice.
On each component, attempt to compute all the $d_i^2$ by the above process.
If no conflicts are found (and having labelled all vertices, push the process one step more to check any edges not yet processed), then the cycle condition holds.

It was noted earlier that the eigenvalues of a symmetrizable matrix are all real.
We record here that the analogue of Cauchy's interlacing theorem \cite{F} holds for symmetrizable matrices.

\begin{theorem}\label{T-inter} Let $B$ be a real $n\times n$ symmetrizable matrix. 
Then the eigenvalues of every  principal $(n-1)\times (n-1)$ submatrix of $B$ are real and interlace with the eigenvalues of $B$: if $B$ has eigenvalues $\lambda_1\le \cdots\le \lambda_n$, and some choice of principal submatrix has eigenvalues $\mu_1\le\cdots\le\mu_{n-1}$, then
\[
\lambda_1\le\mu_1\le\lambda_2\le\mu_2\le\cdots\le\mu_{n-1}\le\lambda_n\,.
\]
Furthermore, these submatrices are also symmetrizable. 
\end{theorem}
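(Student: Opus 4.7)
The plan is to reduce everything to the classical (symmetric) Cauchy interlacing theorem by passing through the symmetrization. Since $B$ is symmetrizable, choose a positive diagonal $D=\diag(d_1,\dots,d_n)$ with $S=D^{-1}BD$ symmetric, so $B$ and $S$ are similar and have the same (real) eigenvalues $\lambda_1\le\cdots\le\lambda_n$.

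First I would handle the claim that a principal submatrix is itself symmetrizable, which is the crux of the reduction. Let $I\subseteq\{1,\dots,n\}$ be the index set of the submatrix, write $B_I$ for the corresponding principal submatrix of $B$, and let $D_I=\diag(d_i)_{i\in I}$ and $S_I$ be defined analogously. Since conjugation by a diagonal matrix acts entrywise as $(D^{-1}BD)_{ij}=d_i^{-1}b_{ij}d_j$, and this formula only involves indices $i,j\in I$ when restricted to the submatrix, we get $D_I^{-1}B_I D_I = S_I$. Because $S$ is symmetric, $S_I$ is symmetric, so $B_I$ is symmetrizable with symmetrization $S_I$; in particular its eigenvalues are real.

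Next I would invoke classical Cauchy interlacing for the symmetric pair $(S,S_I)$ when $|I|=n-1$: if the eigenvalues of $S_I$ are $\mu_1\le\cdots\le\mu_{n-1}$, then
\[
\lambda_1\le\mu_1\le\lambda_2\le\mu_2\le\cdots\le\mu_{n-1}\le\lambda_n.
\]
Since $B$ and $S$ share their spectrum, and $B_I$ and $S_I$ share theirs (by the similarity above), the same interlacing holds for the eigenvalues of $B$ and of the principal submatrix $B_I$.

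There is no real obstacle here; the whole argument is essentially a diagram chase noting that taking a principal submatrix commutes with diagonal conjugation, which transfers the interlacing statement from the symmetric case to the symmetrizable one. The only point worth double-checking is that the restricted diagonal $D_I$ still has strictly positive diagonal entries (which is immediate) so that $D_I^{-1}B_I D_I$ is a bona fide symmetrization in the sense of Definition \ref{D:symmetrizable}.
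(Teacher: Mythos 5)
Your proposal is correct and follows essentially the same route as the paper's own proof: pass to the symmetrization $S=D^{-1}BD$, note that deleting a row and the corresponding column commutes with diagonal conjugation so that $S_I=D_I^{-1}B_ID_I$ is the symmetrization of the principal submatrix, and then transfer the classical Cauchy interlacing for $(S,S_I)$ back to $(B,B_I)$ via similarity. No gaps.
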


\begin{proof}

Take $D$ as in Definition \ref{D:symmetrizable}, so that $S=D^{-1}BD$ is symmetric and has the same eigenvalues as $B$ (all real).

If $B_i$ is obtained by deleting row $i$ and column $i$ from $B$, and similarly $D_i$ is obtained from $D$ and $S_i$ from $S$, then we have $S_i = D_i^{-1}B_iD_i$, showing that $B_i$ is symmetrizable.
The symmetric matrix $S_i$ has the same eigenvalues as $B_i$.
By Cauchy interlacing, the eigenvalues of $S_i$ interlace with those of $S$, and hence the eigenvalues of $B_i$ interlace with those of $B$.
\end{proof}
Interlacing for real symmetrizable matrices was (first?) proved by Kouachi \cite{K}. 
It was Kouachi's result 
that stimulated us to embark on the work described in this paper.

\begin{corollary}\label{C:removal} 
Let $A$ be an $n\times n$ symmetrizable matrix  having all its eigenvalues at $-2$ and $2$. 
Then any induced subgraph of $A$ having all its eigenvalues in the open interval $(-2,2)$ has at most $\lfloor\frac{n}{2}\rfloor$ vertices.
\end{corollary}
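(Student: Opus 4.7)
The plan is to exploit Cauchy interlacing (Theorem \ref{T-inter}) together with the fact that all eigenvalues of $A$ lie at the two endpoints $-2$ and $2$.  Write $n = p + q$, where $p$ is the multiplicity of $-2$ and $q$ is the multiplicity of $2$ as an eigenvalue of $A$.  Let $B$ be an induced subgraph on $m$ vertices with all its eigenvalues in the open interval $(-2,2)$; in particular $-2$ and $2$ each appear with multiplicity $0$ in $B$.

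The key step is the following multiplicity lemma, obtained by iterating Theorem \ref{T-inter}: deleting $k = n-m$ rows (and the same columns) from $A$ reduces the multiplicity of any eigenvalue by at most $k$.  Concretely, if $A$ has eigenvalues $\lambda_1 \le \cdots \le \lambda_n$ and the iterated submatrix has eigenvalues $\nu_1 \le \cdots \le \nu_{n-k}$, then a straightforward induction on $k$ using Theorem \ref{T-inter} yields $\lambda_i \le \nu_i \le \lambda_{i+k}$.  Applied to $\lambda_1 = \cdots = \lambda_p = -2$, the inequalities force $\nu_i = -2$ for every $i$ with $i + k \le p$, so the multiplicity of $-2$ in the submatrix is at least $p - k$.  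The symmetric statement holds at the right end for $2$.

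Applying this to $B$: since the multiplicity of $-2$ (respectively $2$) in $B$ is $0$, we obtain $p - (n-m) \le 0$ and $q - (n-m) \le 0$, i.e.
\[
p \le n - m \qquad \text{and} \qquad q \le n - m.
\]
Adding these,
\[
n = p + q \le 2(n-m),
\]
so $m \le n/2$, and since $m$ is an integer, $m \le \lfloor n/2 \rfloor$.

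I do not expect any real obstacle here: the only thing requiring slight care is the iterated interlacing / multiplicity bookkeeping, but this follows immediately from one application of Theorem \ref{T-inter} combined with induction on $k = n-m$.
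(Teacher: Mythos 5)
Your proof is correct and rests on the same mechanism as the paper's: iterated Cauchy interlacing (Theorem \ref{T-inter}) shows that deleting $k$ vertices lowers the multiplicity of an eigenvalue by at most $k$. The paper phrases it slightly differently --- it notes that one of $-2$, $2$ has multiplicity at least $\lfloor\tfrac12(n+1)\rfloor$, so removing up to $\lfloor\tfrac12(n-1)\rfloor$ vertices leaves an eigenvalue at $\pm2$ --- whereas you bound both multiplicities by $n-m$ and add, but these are the same argument in substance.
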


\begin{proof}
Since $A$ has at least $\lfloor\tfrac12(n+1)\rfloor$ eigenvalues at $-2$ or at least  $\lfloor\tfrac12(n+1)\rfloor$ eigenvalues at $2$,  removal of up to $\lfloor\tfrac12(n-1)\rfloor$ vertices leaves at least one eigenvalue at either $-2$ or $2$. 
\end{proof}

A symmetrizable integer matrix is said to be \textbf{cyclotomic} if all its eigenvalues are in the interval $[-2,2]$.
We note that the maximal connected cyclotomic integer symmetric matrices of \cite[Theorems 1, 2, 3]{MS2007} remain maximal amongst the larger set of symmetrizable matrices.

\begin{proposition}\label{P:maximal}
If $A$ is a maximal connected cyclotomic integer symmetric matrix, then $A$ is also maximal in the set of connected cyclotomic symmetrizable integer matrices.
\end{proposition}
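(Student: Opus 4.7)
The plan is to argue by contradiction, using the structural fact (recorded in the introduction and due to \cite{MS2007}) that every maximal connected cyclotomic integer symmetric matrix $A$ satisfies $A^2=4I$. Suppose $A$ is such a matrix, and suppose that $A$ is a proper induced subgraph of some connected cyclotomic symmetrizable integer matrix $B$; I will derive a contradiction.

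First I would pass from $B$ to its (uniquely determined) symmetrization $S$. For $i,j\in V(A)$ one has $b_{ij}=b_{ji}=a_{ij}$, and then \eqref{E:sij2} combined with \eqref{E:sgnsij} gives $s_{ij}=\sgn(b_{ij})\sqrt{b_{ij}b_{ji}}=a_{ij}$, so the principal submatrix of $S$ on $V(A)$ is exactly $A$. Moreover $S$ has the same zero-pattern as $B$ (and hence is connected) and the same spectrum as $B$ (and hence has eigenvalues in $[-2,2]$). Partitioning rows and columns according to $V(A)$ and $V(B)\setminus V(A)$, write
\[
S=\begin{pmatrix} A & X \\ X^{\T} & Y \end{pmatrix}.
\]

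The crux is then to inspect the top-left block of $4I-S^2$: using $A^2=4I$, this block equals $-XX^{\T}$. Since $S$ has spectral radius at most $2$, the matrix $4I-S^2$ is positive semi-definite, and hence so is every principal submatrix of it; in particular $-XX^{\T}$ is positive semi-definite. Combined with the obvious positive semi-definiteness of $XX^{\T}$, this forces $XX^{\T}=0$, and so $X=0$. But then, since $S$ and $B$ have the same zero pattern, the (nonempty) vertex set $V(B)\setminus V(A)$ has no edges to $V(A)$ in $B$, contradicting the connectedness of $B$.

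The essential input is the identity $A^2=4I$; this is not reproved here but is one of the principal outputs of \cite{MS2007} recalled in the introduction. I note in passing that the argument nowhere uses integrality of $B$, so the same proof shows that any maximal connected cyclotomic symmetric matrix $A$ satisfying $A^2=4I$ is in fact maximal among all connected cyclotomic symmetrizable real matrices.
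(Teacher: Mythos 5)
Your proof is correct, and it shares the paper's central input (the identity $A^2=4I$ from \cite{MS2007}, i.e., every vertex of $A$ has norm $4$) but executes the contradiction differently. The paper works with $B$ directly: connectedness and sign symmetry force some vertex of $A$ to acquire norm strictly greater than $4$ inside $B$, so $B^2$ (which is again symmetrizable) has a diagonal entry exceeding $4$, and repeated application of the symmetrizable interlacing theorem (Theorem \ref{T-inter}) then yields an eigenvalue of $B^2$ exceeding $4$. You instead pass to the symmetrization $S$ first — correctly noting via \eqref{E:sij2} and \eqref{E:sgnsij} that the principal block on $V(A)$ is exactly $A$ and that the zero pattern is preserved — and then read off $-XX^{\T}\succeq 0$ from the top-left block of the positive semi-definite matrix $4I-S^2$, forcing $X=0$ and contradicting connectedness. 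The two arguments are contrapositives of one another (the paper contradicts cyclotomicity of $B$, you contradict its connectedness), and your PSD step is essentially what the paper's interlacing-down-to-a-diagonal-entry step accomplishes; the gain on your side is that after symmetrizing you need only standard facts about symmetric matrices rather than Theorem \ref{T-inter}, and you extract the slightly stronger structural conclusion $X=0$ rather than just one offending entry. Your closing remark is also accurate, though for the same reason it applies equally to the paper's argument, which uses integrality of $B$ only through sign symmetry.
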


\begin{proof}
Define the norm of the $i$th row/vertex of a matrix/digraph $C$ to be the $i$th diagonal entry of $C^2$.
(If $C = (c_{kl})$, this is $\sum_j c_{ij}c_{ji}$.)
From \cite[Theorems 1, 2, 3]{MS2007} we know that each row in $A$ has norm $4$.
If $B$ were a connected symmetrizable integer matrix properly containing $A$ as an induced submatrix, then some row of $B$ would have norm greater than $4$ (else $A$ would form a connected component of $B$).
Then the matrix $B^2$ would have some diagonal entry greater than $4$; and note that $B^2$ is symmetrizable (if $D^{-1}BD$ is symmetric, then so is $(D^{-1}BD)^2$).
By repeated application of Theorem \ref{T-inter}, the symmetrizable matrix $B^2$ would have some eigenvalue greater than $4$.
Hence $B$ would have some eigenvalue outside $[-2,2]$, so $B$ would not be cyclotomic.
\end{proof}

 \begin{lemma}\label{L-Eqsym} Suppose that the digraphs $A$ and $B$ are equivalent. Then so are $A^\T$ and $B^\T$.
 \end{lemma}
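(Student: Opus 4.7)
The plan is to unwind the definition of equivalence and take transposes directly. By Definition \ref{D:equivalence}, $A$ and $B$ being equivalent means there is a signed permutation matrix $P$ with $P^\T A P = \varepsilon B$ for some $\varepsilon \in \{+1,-1\}$ (using that $P^{-1} = P^\T$ since $P$ is orthogonal).

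The key step is to transpose this identity. Applying $(\cdot)^\T$ to both sides of $P^\T A P = \varepsilon B$ gives
\[
P^\T A^\T P = (P^\T A P)^\T = \varepsilon B^\T.
\]
Thus the same signed permutation matrix $P$ witnesses the equivalence of $A^\T$ and $B^\T$, with the same sign $\varepsilon$.

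There is no real obstacle here: the argument relies only on the elementary facts that $(XYZ)^\T = Z^\T Y^\T X^\T$, that a signed permutation matrix satisfies $P^{-1} = P^\T$, and that the class of signed permutation matrices is closed under transposition (indeed, $P$ itself is reused). One minor bookkeeping point is that the sign $\varepsilon$ is preserved, which is what is needed to land in $\pm B^\T$ as required by Definition \ref{D:equivalence}.
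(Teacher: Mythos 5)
Your proof is correct and is essentially identical to the paper's one-line argument: transposing $B=\pm P^\T AP$ gives $B^\T=\pm P^\T A^\T P$, so the same $P$ and the same sign work. Nothing further is needed.
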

 
 \begin{proof}
 If $B=\pm P^\T AP$, then $B^\T = \pm P^\T A^\T P$.
 \end{proof}
 
\begin{lemma}\label{L-equiv} 

Each of the following nonsymmetric graphs is equivalent to its transpose:
$\widetilde{A}_{1}'$,
$O_4'$,
${S}_8^{-}$,
$L_4$,
${L}_{n}'\,(n\ge 4$, even$)$, 
${A}_{2}^{\pm }$,
${O}_{4}^{\pm }$,
$\widetilde{C}_n'\,(n\ge 3)$,
$B_2$, $F_4$,
$G_2$,
$O_4''$,
$B_2^{\pm}$.
\end{lemma}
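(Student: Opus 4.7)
The plan is to verify, case by case, that each listed digraph $A$ admits an explicit signed permutation matrix $P$ with $P^\T A P = \pm A^\T$; by Definition~\ref{D:equivalence} this is precisely what it means for $A$ to be equivalent to its transpose.

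For the $2$-vertex digraphs $\widetilde{A}_1'$, $B_2$, $G_2$, the plain swap $P = \left(\begin{smallmatrix} 0 & 1 \\ 1 & 0 \end{smallmatrix}\right)$ reverses the single directed edge and gives $P^\T A P = A^\T$ directly. For the $2$-vertex charged digraphs $A_2^\pm$ and $B_2^\pm$, the two vertices carry opposite charges, so the plain swap would interchange the charges; the signed permutation $P = \left(\begin{smallmatrix} 0 & -1 \\ 1 & 0 \end{smallmatrix}\right)$ instead yields $P^\T A P = -A^\T$, again by a direct $2 \times 2$ computation (and $-A^\T$ is allowed by the definition of equivalence).

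For the small finite cases $O_4'$, $O_4^\pm$, $O_4''$, $L_4$, $F_4$, and $S_8^-$, I would identify an automorphism of the underlying undirected graph that simultaneously interchanges the two endpoints of every asymmetric edge. In $F_4$ (a $4$-vertex path with a single middle asymmetric edge) the reversal $i \mapsto 5-i$ accomplishes this. In each of the $4$-cycles $O_4'$, $O_4^\pm$, $O_4''$ the diagonal reflection swapping opposite corners works, provided it is combined with a suitable diagonal sign switch to preserve the signs of the wiggly (negatively-weighted) edges. For $L_4$ and $S_8^-$ the same approach applies, with the verification reducing to a direct check on an explicit $4 \times 4$ or $8 \times 8$ signed permutation matrix matching a visible symmetry of the drawing.

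For the two infinite families $\widetilde{C}_n'$ ($n \geq 3$) and $L_n'$ ($n \geq 4$, even) a uniform construction applies. The digraph $\widetilde{C}_n'$ is a path of $n+1$ vertices whose two asymmetric end-edges, when read from left to right, both have weight pattern $(a_{i,i+1}, a_{i+1,i}) = (1,2)$; the path-reversal permutation $i \mapsto n+2-i$ swaps these two end-edges and, by reversing the local reading direction, sends $(1,2)$ to $(2,1)$, so $P^\T A P = A^\T$ with no sign switch needed. For $L_n'$ the primed weight choice at the leftmost square is by design exactly the one making the ladder invariant under its $180^\circ$ rotation; this rotation, combined with a diagonal sign switch chosen to balance the wiggly rails of the ladder, gives $P^\T A P = \pm A^\T$.

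The main obstacle is bookkeeping: for the digraphs that carry both asymmetric and negatively-weighted edges (notably $O_4''$, $S_8^-$, and $L_n'$) one must check that the composed signed permutation recovers the correct asymmetric weight pattern \emph{and} the correct edge signs simultaneously. Once the vertex-permutation part is fixed to match the evident geometric symmetry of the drawing, the required sign switches are essentially forced by the negative edges, and the verification collapses to a finite (or parametrized) matrix computation.
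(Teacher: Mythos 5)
Your proposal is correct and takes essentially the same route as the paper, which likewise proves each case by exhibiting a signed permutation matrix $P$ (realizing a symmetry of the drawing, possibly combined with a sign switching and a global negation) satisfying $P^{-1}A^\T P = A$, working out only $O_4'$ explicitly and leaving the remaining cases to the reader. Your case-by-case descriptions --- the plain or signed swap for the two-vertex digraphs, path/ladder reversal for $\widetilde{C}_n'$ and $L_n'$, and geometric symmetries plus forced sign switches for the rest --- are consistent with, and somewhat more detailed than, the paper's sketch.
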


The proof for each of the digraphs listed is demonstrated by showing that for each of these matrices $A$, the transposed matrix $A^\T$ can be transformed back to the original matrix by possibly first replacing $A^\T$ by $-A^\T$, and then by relabelling the vertices, followed by a judicious choice of sign switching (Definition \ref{D:equivalence}). 
For example, $O_4'$ corresponds to the matrix
\[
A = \begin{pmatrix}
0 & -1 & 1 & 0 \\
-1 & 0 & 0 & 1 \\
3 & 0 & 0 & 1 \\
0 & 3 & 1 & 0
\end{pmatrix}\,,
\]
and for the signed permutation matrix
\[
P = \begin{pmatrix}
0 & 0 & -1 & 0 \\
0 & 0 & 0 & 1 \\
-1 & 0 & 0 & 0 \\
0 & 1 & 0 & 0
\end{pmatrix}
\]
one checks that $P^{-1} A^\T P = A$.
Other cases are left to the reader.

In the other direction, we need sometimes to show that two similar-looking digraphs are {\it not} equivalent.

\begin{definition}
In a digraph $G$, we define a {\bf weight modulus sequence} to be a sequence of moduli of edge weights along an induced path in $G$ (an induced subgraph that is a path).
\end{definition}
\begin{lemma}\label{L-inequiv}
If $G_1$ and $G_2$ are digraphs and there is a weight modulus sequence in $G_1$ that does not appear in $G_2$, then $G_1$ and $G_2$ are not equivalent.
 \end{lemma}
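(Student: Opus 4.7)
The plan is to reduce the lemma to the statement that every induced path in $G_1$ maps to an induced path in $G_2$ with the same weight moduli, and then conclude by contraposition. Equivalence of $A_1$ and $A_2$ (the matrices of $G_1$ and $G_2$) means $P^\T A_1 P = \pm A_2$ for some signed permutation matrix $P$. Negation $A \mapsto -A$ clearly leaves every modulus $|a_{ij}|$ unchanged, so it suffices to track the conjugation.

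Next, I would factor the signed permutation as $P = DQ$, where $Q$ is an ordinary permutation matrix corresponding to a bijection $\sigma$ of the vertex set and $D = \diag(\epsilon_1,\dots,\epsilon_n)$ with $\epsilon_i \in \{\pm 1\}$. A direct computation (exactly as in the proof of Lemma \ref{L:equivalent}) gives
\[
(P^\T A_1 P)_{ij} = \epsilon_{\sigma^{-1}(i)}\,\epsilon_{\sigma^{-1}(j)}\,(A_1)_{\sigma^{-1}(i),\,\sigma^{-1}(j)},
\]
so that $|(A_2)_{ij}| = |(A_1)_{\sigma^{-1}(i),\,\sigma^{-1}(j)}|$ for every $i,j$. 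Hence the vertex bijection $\sigma$ preserves the modulus of every directed edge, and in particular preserves the set of edges of the underlying digraph and the pair $\{|a_{ij}|,|a_{ji}|\}$ attached to each edge.

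Finally, an induced path $v_1,v_2,\dots,v_k$ in $G_1$ is characterised purely by which pairs $\{v_s,v_t\}$ carry nonzero entries; since $\sigma$ preserves the vanishing/nonvanishing of every matrix entry, $\sigma(v_1),\sigma(v_2),\dots,\sigma(v_k)$ is an induced path in $G_2$, and by the modulus preservation above it realises the same weight modulus sequence (or its reverse, which counts as the same sequence along an induced path, since a path may be traversed either way). Thus every weight modulus sequence occurring in $G_1$ also occurs in $G_2$. The contrapositive gives the lemma.

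There is no real obstacle here: once the definitions of equivalence and of induced path are unwound, everything reduces to the observation that sign switching and negation act trivially on moduli, while the permutation component of $P$ is just a relabelling of vertices. The one point that must be handled with a word of care is the directionality of the sequence, which is why I record above that induced paths (being undirected by definition of subgraph) are considered up to reversal.
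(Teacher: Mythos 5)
Your proof is correct and is essentially the paper's own argument written out in full: the paper's proof is the single sentence ``Any equivalence preserves the set of all weight modulus sequences,'' and your decomposition $P=DQ$ with the observation that negation and sign switching act trivially on moduli is exactly the justification of that sentence.
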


\begin{proof}
Any equivalence preserves the set of all weight modulus sequences.
\end{proof}

\begin{corollary}\label{C-inequiv} None of the following digraphs is equivalent to its transpose: $M_n\,(n\ge 2)$, ${B_n}\,(n\ge 3)$, $\widetilde{B}_n\,(n\ge 3)$, $\widetilde{C}_n\,(n\ge 2)$, $\widetilde{F}_4$,  $\widetilde{G}
_2$, $L_n\,(n\ge 6$, even$)$,  ${L}_{n}^{+}\,(n\ge 3$, odd$)$.

Furthermore, the graphs $L_n$ and $L_n'$ ($n\ge 4$, even) are not equivalent.
\end{corollary}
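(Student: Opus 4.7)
The plan is to apply Lemma~\ref{L-inequiv} case by case, supplemented by direct invariant arguments from Definition~\ref{D:equivalence} for those digraphs whose few induced paths prevent the lemma from distinguishing them from their transposes. In both approaches the key idea is that each asymmetric edge has a distinguished direction of larger modulus, transposition reverses that direction, and so both the multiset of weight modulus sequences and the multiset of outgoing moduli at each vertex are invariants of equivalence (up to vertex permutation).

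For $B_n\,(n\ge 3)$, $\widetilde B_n\,(n\ge 3)$, $\widetilde C_n\,(n\ge 2)$, $M_n\,(n\ge 3)$, and $\widetilde G_2$, the distinguishing weight modulus sequence is $(1,k)$ with $k\in\{2,3\}$: in $G$ it is realised by a length-$2$ induced path reaching the distinguished leaf through the asymmetric edge traversed in its modulus-$k$ direction, whereas in $G^\T$ the modulus-$k$ direction now leaves the leaf and the leaf's unique neighbour forces any induced path beginning with that edge to repeat a vertex. For $\widetilde F_4$, whose asymmetric edge is interior, I would use $(2,1,1)$, obtained by starting on the middle asymmetric edge in its modulus-$2$ direction and continuing two further steps along the shorter arm; in the transpose the reversed edge cannot support a length-$3$ induced path with this sequence for the same vertex-counting reason. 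For $L_n\,(n\ge 6,\text{even})$ the sequence $(2,1,1)$ works again: a path leaves the pivot by traversing the asymmetric edge, crosses a diagonal into the ladder, and takes one step along a rail; in the transpose the only modulus-$2$ edges arrive at the pivot, so the second vertex of any candidate sequence must be the pivot and the third must be its other neighbour, and then the $4$-cycle structure between adjacent ladder columns makes every potential fourth vertex adjacent to the first.

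For the remaining cases $L_n^+\,(n\ge 3,\text{odd})$, and for the small charged cases $M_2$ and $L_3^+$ (and in fact also $L_5^+$, where the narrow ladder blocks every candidate distinguishing path), Lemma~\ref{L-inequiv} does not separate $G$ from $G^\T$: the weight modulus sequence set is the same in both. For these I would argue directly that the multiset of outgoing moduli $\{|a_{ij}|:j\ne i\}$, taken over all vertices, is preserved by equivalence (with signed-permutation matrices preserving each vertex's multiset up to the underlying vertex permutation and up to an overall sign that is irrelevant to moduli). The pivot $g$ of $L_n^+$ (respectively, the uncharged vertex of $M_2$ or $L_3^+$) is uniquely identifiable by its outgoing-modulus multiset among uncharged vertices, and this multiset is $\{1,1,0,\dots,0\}$ in $G$ but $\{2,2,0,\dots,0\}$ in $G^\T$ (respectively $\{1\}$ vs.~$\{2\}$ for $M_2$); since the charge pattern constrains any permutation realising an equivalence to send uncharged vertices to uncharged vertices, no equivalence can match these multisets.

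For the final assertion, separating $L_n$ from $L_n'$ with $n\ge 4$ even, the distinguishing sequence is $(2,1,\dots,1,2)$ of length $r+1$, where $r=(n-2)/2$: the full rail induced path between the two pivots realises this sequence in $L_n'$, which has one source pivot and one sink pivot, whereas in $L_n$ both pivots are source-oriented and any such sequence forces both endpoints of the corresponding path to be pivots, so the connecting sub-path would have length $r$, one shy of the known pivot-to-pivot diameter $r+1$, which is impossible. The main obstacle throughout is the chord analysis in the ladder graphs: one must verify, by tracking the horizontal rails, diagonals, and pivot edges, that every candidate continuation of a short distinguishing prefix really does collide with a chord or repeat an earlier vertex; this is where the $4$-cycle structure of each pair of adjacent ladder columns does the decisive work.
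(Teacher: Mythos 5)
Your proof is correct and rests on the same basic invariant as the paper's, namely Lemma \ref{L-inequiv}, but your choices of distinguishing data differ in ways worth recording. Where the paper runs the weight modulus sequence $1,1,\dots,1,2$ (resp.\ $2,1,\dots,1$) along an end-to-end induced path of each digraph, you use short local sequences $(1,k)$ or $(2,1,1)$ anchored at the asymmetric edge; both work, and your verification that the transposed sequence cannot be realised (a leaf has no second neighbour; the two neighbours of a pivot of $L_n$ share their remaining neighbourhood, so any induced path through the pivot acquires a chord) is sound. Your argument that $(2,1,\dots,1,2)$ of length $r+1$ separates $L_n'$ from $L_n$ is also valid and arguably cleaner than the paper's, since the observation that every edge advances one column, so the pivot-to-pivot distance is $r+1$, handles $n=4$ and $n\ge 6$ uniformly (the paper switches to the sequence $2,2$ for $n=4$); just note that the two forced pivots are the first and the second-to-last vertices of the path, not its two endpoints. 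For the charged cases you replace the paper's device (restricting weight modulus sequences to all-neutral induced paths, or recording whether the weight-$2$ entry points at a charged vertex) by a genuinely different invariant, the charge-refined multiset of outgoing moduli at each vertex; this is preserved under $A\mapsto \pm P^\T AP$ exactly as you say, and it cleanly disposes of $M_2$, $L_3^+$, $L_5^+$ and indeed every $L_n^+$ at once, which is tidier than the paper's case split.

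One assertion in your write-up is false, though harmless: Lemma \ref{L-inequiv} \emph{does} separate $L_n^+$ from its transpose for $n\ge 7$ (the paper uses the sequence $1,1,\dots,1,2$ along a row from the charged end to the pivot; the path passes through a charged vertex, but the lemma does not care), and only $M_2$, $L_3^+$ and $L_5^+$ genuinely defeat the bare lemma. Since your outgoing-modulus argument covers all $L_n^+$ anyway, nothing breaks, but the claim that the weight modulus sequence sets coincide should be deleted or restricted to the three small cases. A couple of orientation details (the modulus-$3$ direction in $\widetilde{G}_2$, and which arm of $\widetilde{F}_4$ is the shorter one) are stated backwards relative to the paper's figures, but since Lemma \ref{L-inequiv} is symmetric in $G_1$ and $G_2$ this is immaterial.
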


\begin{proof}
To show that each of $M_n$, $B_n$, $\widetilde{B}_n$, $\widetilde{C}_n$, $L_n^+$ is not equivalent to its transpose we apply Lemma \ref{L-inequiv} to the weight modulus sequence $1$, $1$, \dots, $1$, $2$ along a path from one end of the digraph to the other as drawn (right to left for 
$M_n\,(n\ge 3)$, $B_n\,(n\ge 3)$, $L_n^+\,(n\ge 7$, odd); left to right for $\widetilde{B}_n$, $\widetilde{C}_n$). The digraphs $M_2$, $L_3^+$, $L_5^+$ are special cases. For $M_2$, 
 the weight modulus sequence $2$ appears in both $M_2$ and $M_2^\T$, but in one case the weight $2$ is from a neutral vertex to a charged vertex, and in the other case it is not. The same argument applies to $L_3^+$.
For $L_5^+$, note that the weight modulus sequence $1$, $2$ appears in $(L_5^+)^\T$ on an induced path joining the three neutral vertices, whereas in $L_5^+$ a charged vertex is necessarily involved.
For $\widetilde{F}_4$ take the sequence $1$, $2$, $1$, $1$; for $\widetilde{G}_2$ the sequence $3$, $1$; for $L_n$ the sequence $2$, $1$, $1$, \dots, $1$.
To show that $L_n$ and $L_n'$ are not equivalent, for $n\ge 6$ again take the weight modulus sequence $2$, $1$, $1$, \dots, $1$ in $L_n$ from one end of the digraph to the other, and for $n=4$ take the weight modulus sequence $2$, $2$ in $L_4'$.
\end{proof}

\begin{lemma}\label{L:offdiagonal}
Let $\begin{pmatrix} a & b \\ c & d \end{pmatrix}$ be a sign symmetric matrix with both eigenvalues real and in the interval $[-2,2]$.
Then $bc \le 4$.
Moreover if $bc=4$ then $a=d=0$.
\end{lemma}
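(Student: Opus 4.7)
The plan is to work directly with the eigenvalues of the $2\times 2$ matrix via the quadratic formula. The characteristic polynomial is $\lambda^2 - (a+d)\lambda + (ad - bc)$, so the two (assumed real) eigenvalues are
\[
\lambda_{\pm} = \frac{(a+d) \pm \sqrt{(a-d)^2 + 4bc}}{2}.
\]
Sign symmetry gives $\sgn(b) = \sgn(c)$, hence $bc \ge 0$, so the discriminant $(a-d)^2 + 4bc$ is automatically nonnegative (consistent with the hypothesis that the eigenvalues are real).

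Next I would exploit the \emph{gap} between the two eigenvalues rather than their product. Since $\lambda_+ \le 2$ and $\lambda_- \ge -2$, we have
\[
\lambda_+ - \lambda_- = \sqrt{(a-d)^2 + 4bc} \le 4,
\]
and squaring yields $(a-d)^2 + 4bc \le 16$. Because $(a-d)^2 \ge 0$, this already gives $4bc \le 16$, i.e.\ $bc \le 4$, which is the main inequality.

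For the equality case, suppose $bc = 4$. Then the chain $(a-d)^2 + 16 \le 16$ forces $(a-d)^2 = 0$, so $a = d$, and the inequality $\lambda_+ - \lambda_- \le 4$ must be an equality. Combined with $-2 \le \lambda_- \le \lambda_+ \le 2$, this forces $\lambda_- = -2$ and $\lambda_+ = 2$. Their sum is $0 = a + d = 2a$, so $a = d = 0$, as claimed. The main obstacle is simply recognising that the natural quantity to bound is the eigenvalue gap $\lambda_+ - \lambda_-$ (which cleanly encodes both $b$, $c$, and the diagonal difference), rather than the determinant $\lambda_+ \lambda_- = ad - bc$; once that observation is made, the argument is a couple of lines.
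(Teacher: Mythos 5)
Your proof is correct and follows essentially the same route as the paper's: both bound the eigenvalue gap $\sqrt{(a-d)^2+4bc}$ by $4$, deduce $bc\le 4$, and in the equality case conclude $a=d$ with eigenvalues $a\pm 2$, forcing $a=d=0$. No further comment is needed.
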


\begin{proof}
The difference between the two eigenvalues is $\sqrt{(a-d)^2+4bc}$.
This difference is at most $4$, so $4bc \le 16$ (here using that $b$ and $c$ have the same sign), so $bc\le 4$.
Moreover if $bc=4$ then we have $a=d$, and the eigenvalues are $a \pm2$, whence $a=d=0$.
\end{proof}

\section{Proofs of main results}

\begin{proof}[Proof of Theorem \ref{T-1}]

Assume that $A$ is symmetrizable, but not symmetric, is connected and has all its eigenvalues in $[-2,2]$. 
Being symmetrizable, $A$ is sign symmetric, so that $a_{ij}$ and $a_{ji}$ are either both zero, or else both nonzero and of the same sign. 
Hence by nonsymmetry either $A$ or $A^\T$ is equivalent to a digraph with $a_{12}>a_{21}$  both positive integers. 
By repeated application of Theorem \ref{T-inter} we see that the induced subgraph on any two distinct vertices $i$ and $j$ has all its eigenvalues in $[-2,2]$.
So by Lemma \ref{L:offdiagonal} the set $\{a_{12},a_{21}\}$
is equal to either $\{4,1\}$, $\{3,1\}$ or $\{2,1\}$. 
However, we note that if $a_{12}a_{21}=4$ then by Lemma \ref{L:offdiagonal} (again) the digraph $A$ contains $\widetilde{A}_1'$ (Figure \ref{F-AAA}) as an induced subgraph, and the same argument as in Proposition \ref{P:maximal} shows that this is maximal.
So we can assume henceforth that  $\{a_{12},a_{21}\}$ is equal to either $\{3,1\}$ or $\{2,1\}$.

Similarly, we have by Lemma \ref{L:offdiagonal} we have that if $a_{ij}=a_{ji}$ then $a_{ij}=\pm 1$ or $\pm 2$. But if $a_{ij}=a_{ji}=\pm 2$ then $A$ contains $\pm \widetilde{A}_1$ as a subgraph. But this is maximal among integer symmetric matrices having all their eigenvalues in $[-2,2]$, so by Proposition \ref{P:maximal} it 
is maximal among integer symmetrizable matrices having all their eigenvalues in $[-2,2]$. Thus $A=\pm \widetilde{A}_1$,
 contradicting our assumption that $A$ is not symmetric.
 Hence $a_{ij}=a_{ji}=\pm 1$, and $s_{ij}=\pm 1$ by \eqref{E:sij2}.

The symmetrization of $A$, namely $S=D^{-1}AD$ as in Definition \ref{D:symmetrizable}, has the same eigenvalues as $A$, and all entries in $\sqrt{\N0}$.
Indeed all nonzero entries of $S$ are either $\pm 1$, $\pm \sqrt{2}$, or $\pm\sqrt{3}$, and the eigenvalues of $S$ all lie in the interval $[-2,2]$.
Moreover the diagonal entries of $S$ have modulus at most $1$, using Proposition \ref{P:maximal} and \cite[Theorem 3]{MS2007}, along with \eqref{E:sij2}.

Such matrices $S$ were considered by Greaves \cite[Theorems 3.2, 3.4 and 3.5]{G}. 
Greaves gave the form of all maximal such $S$ containing at least one $\pm\sqrt{2}$ or $\pm\sqrt{3}$ entry. 
Those which also satisfy our constraint on the diagonal entries are shown in Figures \ref{F-A13G}, \ref{F-VrG}, \ref{F-WrG}. 
All satisfy $S^2=4I$. 

Now we do not assume that our $A$ is maximal, as we wish to establish the last sentence of the theorem, but we know that $S=D^{-1}AD$ is equivalent to a subgraph of one of the Greaves examples.
A computation as in Lemma \ref{L:equivalent} shows that $A$ is equivalent to a matrix whose symmetrization is a subgraph one of the Greaves graphs, so working up to equivalence we may assume that $S$ is actually a subgraph of one of them.
Moreover $S$ must include at least one irrational edge, else $A$ would be symmetric.

We now determine the $A$ that correspond to such $S$, using \eqref{E:sij2} and \eqref{E:sgnsij} to constrain the possibilities.
When $s_{ij}\in\{0,-1,1\}$ we have $a_{ij}=a_{ji}$.
When $s_{ij} = \sqrt{3}$, $-\sqrt{3}$, $\sqrt{2}$, or $-\sqrt{2}$, we have $\{a_{ij},a_{ji}\}=\{1,3\}$, $\{-1,-3\}$, $\{1,2\}$, or $\{-1,-2\}$ respectively.
Thus an edge
\[
\begin{xy}
@={(0,0)="a", (20,0)="b"}, @@{*\frm<3pt>{*}},
(10,0) *{\sqrt{a}},
"a";(6,0) **@{-}, (14,0);"b" **@{-}
\end{xy}
\]
in $S$, with $a\in\{1,2,3\}$, corresponds to either

\[
\begin{xy}
@={(0,0)="a", (20,0)="b", (40,0)="c", (60,0)="d"}, @@{*\frm<3pt>{*}},
"a";"b" **@{-}, "c";"d" **@{-},
(10,2) *[@!-90]\txt\tiny{$1$}, (10,-2) *[@!90]\txt\tiny{$a$},
(50,2) *[@!-90]\txt\tiny{$a$}, (50,-2) *[@!90]\txt\tiny{1},
(30,0) *\txt{or}
\end{xy}
\]
in $A$, and similarly for negative irrational edges.
Moreover by Lemma \ref{L:symcomp} we must make consistent choices for edges between the same pair of symmetric components of $A$.
In nearly all cases this immediately shows that $A$ is a subgraph of one of the examples claimed: $\widetilde{A}_1'$, $O_4'$, $S_8^-$, $L_n$, $L_n'$, $L_n^\T$, $O_4^{\pm}$, $A_2^{\pm}$, $L_n^+$, $(L_n^+)^\T$.

The most complicated case is when $S = L_{4\G}$, or a subgraph of it, for then Lemma \ref{L:symcomp} places no restrictions on the choice of edges, so \textit{a priori} there are $16$ possibilities to check.
For $S = L_{4\G}$ we use the cycle condition \eqref{E:cycle} to limit the possibilities to  
\[
\begin{xy}
@={(0,8)="a", (12,0)="d", (12,16)="b", (24,8)="c", (42,8)="e", (54,0)="h", (54,16)="f", (66,8)="g", (84,8)="i", (96,0)="l", (96,16)="j", (108,8)="k"},
@@{*\frm<3pt>{*}},
@i @={(30,8), (72,8), (114,8)}, @@{*{,}},
"a";"b" **@{-}, "b";"c" **@{-}, "c";"d" **@{~}, "d";"a" **@{-},
"e";"f" **@{-}, "f";"g" **@{-}, "g";"h" **@{~}, "h";"e" **@{-},
"i";"j" **@{-}, "j";"k" **@{-}, "k";"l" **@{~}, "l";"i" **@{-},
@i @={(5,13.5), (17,5.5), (47,13.5),  (89,13.5), (101,5.5)},
@@{*[@!-56.5]\txt\tiny{1}},
(59,5.5) *[@!-56.5]\txt\tiny{2},
@i @={(6.5,10.5), (18.5,2.5), (48.5,10.5), (90.5,10.5), (102.5,2.5)},
@@{*[@!123.5]\txt\tiny{2}},
(60.5,2.5) *[@!123.5]\txt\tiny{1},
@i @={(7,5.5), (49,5.5), (19,13.5)}.
@@{*[@!-123.5]\txt\tiny{1}},
@i @={(61,13.5), (91,5.5), (103,13.5)},
@@{*[@!-123.5]\txt\tiny{2}},
@i @={(60,10.5), (90,2.5), (102,10.5)},
@@{*[@!56.5]\txt\tiny{1}},
@i @={(5.5,2), (47.5,2), (17.5,10)},
@@{*[@!56.5]\txt\tiny{2}},
\end{xy}
\]
or the transposes of these, and working up to equivalence one checks that the distinct possibilities for $A$ are $L_4$ and $L_4'$.

\end{proof}  

\begin{proof}[Proof of Corollary \ref{C-1}] 

Note that the hypothesis here merely requires $A$ to be sign symmetric, rather than symmetrizable.
But apart from $\widetilde{A}_n$ the digraphs we seek cannot contain any cycles (for the spectral radius of $\widetilde{A}_n$ is $2$, and by Perron-Frobenius theory any connected graph containing it as a (not necessarily induced) subgraph has strictly larger spectral radius). 
Hence they all (including $\widetilde{A}_n$) satisfy \eqref{E:cycle}, and so, by Proposition \ref{P-spec} must be symmetrizable. 

Since the symmetric case is covered in \cite[Theorem 9]{MS2007}, we can assume that our matrix (digraph) $A$ is nonsymmetric but symmetrizable.
From Theorem \ref{T-1}, we know that $A$ is a subgraph of one of the digraphs listed there. 
We must look for nonsymmetric, symmetrizable connected subgraphs of these digraphs that are maximal with the property that they are equivalent to a digraph having no negative edges or any negative charges. 
Thus we must remove as few vertices as possible to attain that aim. 
Trivially, there are no such subgraphs from $\widetilde{A}_{1}'$ (other than $\widetilde{A}_1'$ itself),  ${A}_{2}^{\pm }$ or $O_{4}^{\pm }$ (for this last  one, note that one must either remove both the negatively charged vertices or both the positively charged ones).  
From $O_4'$ we remove one vertex to get $\widetilde{G}_{2}$ or its transpose, while from $S_8^{-}$ we remove three vertices to get a digraph equivalent to $\widetilde{F}_{4}$ , $\widetilde{C}_4$ or $\widetilde{B}_3$ 
or the transpose of one of these (all $4$-cycles in $S_8^-$ must be broken). 

To apply the same kind of argument to $L_n$, $L_n^\T$, $L_n'$, $L_n^{+}$ and $(L_n^{+})^\T$, we note that these digraphs contain quadrilaterals with one or three negative edges. 
This number of negative edges may flip between $1$ and $3$ under equivalence, but is never even, and so never zero.  
Such quadrilaterals have an eigenvalue at $2$ or at $-2$. 
Hence, by interlacing, so does any digraph containing such a quadrilateral as a subgraph. 
Thus the only subgraphs we are interested in are those having none of these `odd' quadrilaterals as subgraphs. 
We must therefore remove at least one vertex from every `odd' quadrilateral. 
For $L_n^+$ we must also remove a vertex from the triangles containing the two positively charged vertices.
One way to do this is to remove the bottom row of $r$ vertices from  $L_n$, $L_n^\T$, $L_n'$, $L_n^{+}$ and $(L_n^{+})^\T$. 
These are clearly maximal, as adding back any of these bottom vertices will produce an odd quadrilateral, or, in the $L_n^+$ case, a triangle.
So from $L_{2r+2}$ we obtain $(\widetilde{C}_{r+1})^{\,\T}$,  from $L_{2r+2}^\T$ we obtain $\widetilde{C}_{r+1}$, from  $L_n'$ we obtain $\widetilde{C}_{r+1}'$, from $L_{2r+1}^{+}$ we obtain $M_{r+1}$ and from $(L_{2r+1}^{+})^\T$ we obtain $M_{r+1}^\T$. 
A second way to obtain the kind of graphs we require is to remove from $L_n$ and $L_n'$  the leftmost vertex as well as all bottom vertices except the first. 
This gives the graphs $\widetilde{B}_{r+1}$, $\widetilde{B}_{r+1}^{\,\T}$. 
(Doing this for $L_n^{+}$ gives the symmetric graph $I_{r+1}$ 
of Figure \ref{F-Mr}.)
We leave it as an exercise for the reader to check that these are the only graphs of the kind we are looking for that we can obtain from $L_n$, $L_n'$, $L_n^{+}$ and their transposes.

\end{proof}

\begin{remark}An alternative proof of Corollary \ref{C-1} comes from the identification of symmetrizable non-negative integer matrices as quotients of equitable partitions of graphs \cite{MSwip}.
Again we dispose of cycles and reduce to the symmetrizable case.
Hence we may suppose that $G$ is a quotient of a connected graph $H$.
Now each eigenvector of $G$ lifts to an eigenvector of $H$ that is constant on the subsets of the partition, and in the non-negative case one can apply Perron-Frobenius theory to deduce that $G$ and $H$ have the same spectral radius, so that $H$ also has all its eigenvalues in the interval $[-2,2]$.
Thus $H$ is an induced subgraph of one of the Smith graphs $\widetilde{A}_n$, $\widetilde{D}_n$, $\widetilde{E}_6$, $\widetilde{E}_7$, or $\widetilde{E}_8$.
One can therefore identify all the possible $G$ by considering all possible such $H$, and all possible ways of forming equitable partitions.
\end{remark}

\begin{proof}[Proof of Theorem \ref{T-2}] 
Here we are looking for connected symmetrizable integer matrices with all eigenvalues in the open interval $(-2,2)$. 
Using Corollary \ref{C:removal}, the spectral results in the finite case follow from the spectral results in the affine case by taking all the maximal digraphs of Theorem \ref{T-1}, and removing sets of vertices, and their incident edges, so that the resulting graphs have no eigenvalues at $2$ or $-2$.

Note that we must remove at least half of the vertices of the graph we are considering (Corollary \ref{C:removal}) and do it in such a way that the resulting induced subgraph is still both connected and nonsymmetric. 
This is not possible for $\tilde{A}_{1}'$ or $\tilde{A}_{2}^\pm$.
For $O_4^\pm$ the only possibility is $B_2^\pm$ (or something equivalent to it, or its transpose: henceforth we omit such comments).
For $O_4'$ the only possibility is $G_2$. 
For $S_8^{-}$: (i) we can remove four vertices on a face bounded by two `12' edges (for instance, its front face), giving $O_4''$; (ii) we can remove one vertex and its three adjacent vertices, giving $\widetilde{B}_3$, but $\widetilde{B}_3$ has $2$ as an eigenvalue, so a further vertex then needs to be removed, leaving $B_3$ or $C_3=B_3^\T$; (iii) we can take a $4$-vertex path in $S_8^{-}$ where the three edges are mutually orthogonal, giving $B_4$, $C_4$
or $F_4$.

For $L_n$ (and similarly for $L_n^\T$ and $L_n'$) where $n=2r+2$, we must remove $r+1$ vertices. 
We cannot remove both end vertices, as then the resulting subgraph will be symmetric. 
On the other hand, we cannot leave both end vertices in place, as then removal of $r+1$ vertices produces a nonconnected graph. 
Thus we must remove one end vertex, say the right-most one.
 
Noting that  $L_n$ consists of $r$ `vertical pairs' of vertices, as well as the two end vertices, suppose that we remove the $k\ge0$ rightmost vertical pairs from $L_n$, but not the $(k+1)$st vertical pair, producing a graph $H$ which is a subgraph of $L_{n-2k}$ and has $2(r-k)+1$ vertices.
Thus by Corollary \ref{C:removal} a further $r-k$ vertices must be removed from $H$ before it can possibly have all eigenvalues in $(-2,2)$. 
Now suppose we remove some vertical pair from $H$, but not its rightmost one. 
This would produce a disconnected graph, since the leftmost vertex and a vertex in its rightmost pair would now be in different connected  components. 
Hence to obtain a connected graph with all eigenvalues in $(-2,2)$ we must remove one vertex from each vertical pair. 
Thus we need to remove at least $1+2k+(r-k)=1+r+k$ vertices in all. 
The remaining graph is essentially $B_{r-k}$ or $C_{r-k}=B_{r-k}^\T$, although it may have some negative edges. 
Any negative edges can be made positive under equivalence by sign switching, giving a graph equivalent to $B_{m}$ for some $m\ge 2$.
 
The argument for $L_n^{+}$ or its transpose is similar. 
We cannot remove the leftmost vertex, as the subgraph would then be symmetric. 
Also, the induced graph on the two charged vertices has an eigenvalue at $2$, so at least one of these vertices must be removed. 
If both are removed, the resulting subgraph is a subgraph of $L_n$, and 
we are in the previous case.
Thus we may assume that our subgraph contains a path from the leftmost vertex to a charged vertex. 
By sign switching, we see that we can, under equivalence, assume that all edges of this path are positive. 
On labelling the vertices of the path from left to right as $1$, $2$, \dots, $r+1$ we see that its adjacency matrix has eigenvector $(1,2,\dots,2)$ with eigenvalue $2$. 
Hence, by interlacing, no subgraph of $L_n^{+}$ containing this path can 
 have all its eigenvalues in $(-2,2)$; thus $L_n^{+}$ gives no new graphs.
 
We leave as an exercise for the reader to show that the graphs produced, i.e., $B_2^{\pm}$, $G_2$, $O_4''$, 
$F_4$, $B_n$ do indeed have all their eigenvalues in $(-2,2)$.
 
\end{proof}

\begin{proof}[Proof of Corollary \ref{C-5}] 
This follows almost immediately from Theorem \ref{T-2}.  
One only has to note that removal of a vertex from $O_4''$ or $B_{2}^{\pm }$ gives no digraphs that are not equivalent to either the matrix $(1)$ or subgraphs of $F_4$.
\end{proof}

\begin{ack}  We are very grateful for the referee's meticulous reading of the submitted paper, and for pointing out many minor errors in it.
\end{ack}


\begin{thebibliography}{12}





\bibitem{A} Arnold, V.,  in ``Problems in present day mathematics'', Felix E. Browder, {\it Mathematical developments arising from Hilbert problems}, Proceedings of symposia in pure mathematics held at Northern Illinois University, De Kalb, Ill., May,
 1974, XXVIII, edited by Felix E. Browder, American Mathematical Society 1976,  p. 46, ``Problem VIII. The A-D-E classifications (V. Arnold)''.
 
 


 

\bibitem{Carter2005}
Carter, R.W.,
\textit{Lie algebras of finite and affine type}, Cambridge Studies in Advanced Mathematics \textbf{96}, Cambridge University Press, Cambridge 2005.




\bibitem{F} Fisk, S., A very short proof of Cauchy's interlace theorem for eigenvalues of hermitian matrices, Amer. Math. Monthly {\bf 112} (2005), 118. \\ See also
                     {arXiv:math.CA/0502408v1}.
                     

                   
                     
\bibitem{G} Greaves, G., Cyclotomic matrices over real quadratic integer rings.
 Linear Algebra Appl.  {\bf 437}  (2012),  no. 9, 2252--2261.
 

  
 
\bibitem{HHSV} Hazewinkel, M.,  Hesselink, W.,  Siersma, D.,  Veldkamp, F.D.  The ubiquity of Coxeter-Dynkin diagrams (an introduction to the A-D-E problem). Nieuw Arch. Wisk. (3)  {\bf 25}  (1977),  no. 3, 257--307.

\bibitem {K}  Kouachi, S., The Cauchy interlace theorem for symmetrizable matrices.\\ 
 arXiv:1603.0451v1[math.DS], 2016.
 

  
 

\bibitem{MS2007} McKee, J., Smyth, C., Integer symmetric matrices having all their eigenvalues in the interval $[-2,2]$, \emph{J. Algebra} \textbf{317}, 2007, 260--290.

\bibitem{MSwip}  McKee, J., Smyth, C., Symmetrizable matrices, quotients, and the trace problem, in preparation.




 
 
 


\bibitem{Smi} Smith, J.H.,  Some properties of the spectrum of a graph. 1970  Combinatorial Structures and their Applications (Proc. Calgary Internat. Conf., Calgary, Alta., 1969) pp. 403--406 Gordon and Breach, New York 


\end{thebibliography}
\end{document}